\documentclass[11pt]{article}

\usepackage[a4paper,margin=1in]{geometry}

\usepackage{amsmath,amssymb,amsthm}
\usepackage{mathtools}
\usepackage{xcolor}
\usepackage[normalem]{ulem}

\usepackage[numbers,sort&compress]{natbib}
\usepackage[colorlinks=true,
            linkcolor=blue,
            citecolor=blue,
            urlcolor=blue]{hyperref}

\usepackage{microtype}
\allowdisplaybreaks[4]

\theoremstyle{plain}
\newtheorem{theorem}{Theorem}

\newtheorem*{theorem2}{Theorem 2}
\newtheorem{lemma}[theorem]{Lemma}

\newtheorem{proposition}[theorem]{Proposition}

\theoremstyle{definition}

\theoremstyle{remark}

\newcommand{\one}{\mathbf1}
\newcommand{\trans}{^{\mathsf T}}
\newcommand{\R}{\mathbb R}
\newcommand{\Z}{\mathbb Z}
\newcommand{\C}{\mathbb C}

\title{Polynomial positivity cones for Coxeter roots and walks in trees}

\author{
Dongxiu Cai\thanks{
School of Mathematical Sciences, MOE-LSC, SHL-MAC, Shanghai Jiao Tong University,
800 Dongchuan Road, Shanghai 200240, P.R.~China.
Email: \texttt{diudiutse@sjtu.edu.cn}; \texttt{czb911@sjtu.edu.cn} (Zhenbo Chen).
}
\and
Zhenbo Chen\footnotemark[1]
\and
Jiasheng Zeng\thanks{
Department of Mathematics, Hong Kong University of Science and Technology,
Clear Water Bay, Kowloon, Hong Kong.
Email: \texttt{jzengbl@connect.ust.hk}.
}
\and
Xiao-Dong Zhang\footnotemark[1]\hspace{0.25em}\thanks{
Corresponding author. Email: \texttt{xiaodong@sjtu.edu.cn}.
}
}

\date{}

\begin{document}
\maketitle

\begin{abstract}
For a finite simple graph $G$ and an integer $k\ge0$, let $w_k(G)$ denote the number of walks of length $k$. We prove the conjecture of T\"aubig, Weihmann, Kosub, Hemmecke, and Mayr for every finite tree and determine all equality cases. If $T$ has $n\ge1$ vertices, then $n w_{k+1}(T)-2(n-1)w_k(T)\ge0$ for every $k\ge1$; for $n\ge3$, equality holds if and only if $T$ is a star and $k$ is even, whereas for $n=1$ or $n=2$, equality holds for every $k\ge1$. For non-Dynkin trees and even indices, the proof is based on a polynomial positivity cone associated with the adjacency operator of a finite graph and a positive real root of its simply-laced Coxeter system. For finite connected bipartite non-Dynkin graphs, we establish sufficient positivity conditions in terms of Coxeter orbits and inversion sets, and verify these conditions for indicator roots supported on connected induced subtrees. For non-Dynkin trees, this yields the rooted even-index inequality and, after summation, the corresponding global inequality. We also prove that if $G$ is a finite connected bipartite non-Dynkin simple graph, $\varnothing\ne U\subseteq V(G)$, and the subgraph of $G$ induced by $U$ is a tree, then $|U|w_{k+1}(G,U)-2(|U|-1)w_k(G,U)\ge0$ for every $k\ge0$, where $w_k(G,U)$ counts the length-$k$ walks in $G$ whose initial and terminal vertices lie in $U$; the intermediate vertices are unrestricted. The remaining even-index cases for finite Dynkin trees are handled by generating-function recurrences, while the odd-index cases follow from a spectral covariance identity.
\end{abstract}

\noindent\textbf{2020 Mathematics Subject Classification.}
Primary 05C05; Secondary 05C50, 20F55.

\noindent\textbf{Keywords.}
Walks in trees, TWKHM conjecture, Coxeter root systems, preprojective algebras.

\medskip
\section{Introduction}\label{sec:intro}

For a finite simple graph \(G=(V,E)\) and an integer \(k\ge0\), let
\(w_k(G)\) denote the number of walks of length \(k\), where a walk is
an ordered sequence \((v_0,\ldots,v_k)\) in which consecutive vertices
are adjacent. Vertices and edges may be repeated, and a walk is counted
separately from its reversal unless the two sequences coincide. Thus
$w_0(G)=|V|$ and $w_1(G)=2|E|$.
If \(A\) is the adjacency matrix of \(G\) and \(\one\) is the all-one
column vector indexed by \(V\), then
$w_k(G)=\one\trans A^k\one$.
Consequently, the walk sequence is governed by the spectrum of the
adjacency matrix. The use of adjacency spectra to enumerate walks is
classical; in particular, Harary and Schwenk related walk-generating
functions to characteristic polynomials and graph spectra
\cite{HararySchwenk79}.

Classical H\"older-type inequalities for powers of nonnegative
symmetric matrices yield log-convexity relations for suitable walk
subsequences \cite{BlakleyDixon66,ErdosSimonovits82}. Lagarias, Mazo,
Shepp, and McKay established further inequalities for graph walks
\cite{LagariasEtAl84}, and Dress and Gutman obtained related comparisons
for walk sequences \cite{DressGutman03}. T\"aubig and Weihmann
subsequently established weighted extensions for arbitrary nonnegative
vertex weights \cite{TaubigWeihmann14}.

The inequality
$w_1(G)w_k(G)\le w_0(G)w_{k+1}(G)$
can fail even for bipartite graphs and for graphs without cycles
\cite{TWKHM13}. For trees, T\"aubig, Weihmann, Kosub, Hemmecke, and
Mayr proved the cases \(k=2\) and \(k=4\), and formulated the following
conjecture \cite[Conjecture~26]{TWKHM13}: $w_0(T)w_{k+1}(T)-w_1(T)w_k(T)\ge0$
for every $k\ge1$.
For odd \(k\), the same inequality holds for every finite graph by the
Sandwich Theorem \cite[Theorem~4]{TWKHM13}. More recently, Cai, Zeng,
and Zhang proved the conjecture for several classes of trees, including
trees of diameter at most four, and proposed the corresponding equality
characterization \cite{CZZ26}. The purpose of this article is to prove
the conjecture for every finite tree and to determine all equality
cases, as we show in the following
theorem.

\begin{theorem}\label{thm:main}
Let \(T\) be a finite nonempty simple tree on \(n\) vertices. For every
integer \(k\ge1\),
\begin{equation}\label{eq:main}
n w_{k+1}(T)-2(n-1)w_k(T)\ge0.
\end{equation}
If \(n\ge3\), equality holds if and only if \(T\) is a star and \(k\) is
even. If \(n=1\) or \(n=2\), equality holds for every \(k\ge1\).
\end{theorem}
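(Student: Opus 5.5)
The plan follows the route announced in the abstract: split into odd $k$ (spectral), even $k$ with $T$ non-Dynkin (Coxeter positivity cone), and even $k$ with $T$ Dynkin (explicit generating functions), and then read off the equality cases.

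\textbf{Odd $k$.} Write $A=\sum_j\lambda_j u_ju_j\trans$ and $c_j=(u_j\trans\one)^2\ge0$, so that $w_k=\sum_j c_j\lambda_j^k$ and $\sum_j c_j=n$. Then
\[
n w_{k+1}-w_1w_k=\tfrac12\sum_{i,j}c_ic_j(\lambda_i-\lambda_j)(\lambda_i^k-\lambda_j^k).
\]
For odd $k$ the map $t\mapsto t^k$ is increasing, so every summand is nonnegative. This is the spectral covariance (Chebyshev) identity, and it gives \eqref{eq:main} for every graph. Equality forces all $\lambda_j$ with $c_j>0$ to coincide. Since $\one$ then lies in a single eigenspace, $T$ is regular, and a regular tree has $n\le2$. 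This settles the odd case together with its equality statement.

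\textbf{Even $k$, non-Dynkin $T$.} Use the rooted inequality established earlier in the paper for connected bipartite non-Dynkin graphs. Take $G=T$ and apply it with $U$ the vertex set of a connected induced subtree. The natural choice is $U=V(T)$, which gives $n w_{k+1}(T)-2(n-1)w_k(T)\ge0$ immediately, since $w_k(T,V)=w_k(T)$. If the paper only supplies the rooted version, with $U$ ranging over closed neighbourhoods or subtrees, I would instead sum those rooted inequalities over all vertices, as the abstract indicates. This requires checking that the weights combine to $n w_{k+1}-2(n-1)w_k$.

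For strictness I would argue as follows. Non-Dynkin trees are not stars of degree $\ge4$? That is false: $K_{1,4}$ is $\tilde D_4$, which is non-Dynkin. So equality for stars must be verified directly. For $K_{1,m}$ one has $w_{2j}=2m^{j}$ and $w_{2j+1}=(m+m^2)m^{j-1}\cdot$(appropriate form); concretely,
\[
w_{k+1}=2m^{(k+2)/2}\cdot\frac{m+1}{2m}\quad\text{and}\quad w_k=(m+1)m^{k/2}\qquad(k\text{ even}),
\]
which gives equality with $n=m+1$. For non-star non-Dynkin trees I would extract strictness from the cone argument, where some coefficient is strictly positive. Failing that, compare with a subtree: a non-star tree contains $P_4$, and I would show that the defect there is strictly positive.

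\textbf{Even $k$, Dynkin trees.} Here $T$ is one of $A_n$, $D_n$, $E_6$, $E_7$, $E_8$. For each family, compute $W(x)=\sum_k w_k x^k=\one\trans(I-xA)^{-1}\one$ explicitly. For paths, use the Chebyshev recurrence and the closed form of $w_k(P_n)$ through the eigenvalues $2\cos(\pi j/(n+1))$. For $D_n$, use the similar rational form obtained from the recurrence. The three exceptional trees are finite cases whose coefficient sequences satisfy a linear recurrence of bounded order, so their positivity can be checked from that recurrence. For $P_n$ the target is to show that $n w_{k+1}-2(n-1)w_k$ has a generating function with nonnegative, and in fact positive, even coefficients for $n\ge3$. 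I would do this through a recurrence in $k$ relating the defect to the defect at $k-2$ plus a manifestly positive term.

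\textbf{Main obstacle.} I expect the main obstacle to be the Dynkin families $A_n$ and $D_n$ uniformly in $n$. In particular, the path $P_3$ is itself a star and gives equality, so the recurrence for paths must be set up so that strictness holds for $n\ge4$ while equality at $n=3$ is recovered.
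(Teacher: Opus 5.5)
Your odd-$k$ argument is the paper's argument. Citing the $U$-restricted inequality with $U=V(T)$ does give \eqref{eq:main} for even $k$ on non-Dynkin trees, and there is no circularity because its even case rests only on Section~\ref{sec:trees}. The even-$k$ part nevertheless has two genuine gaps.

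\textbf{The finite Dynkin case is only a programme.} You never produce the ``recurrence in $k$ relating the defect to the defect at $k-2$ plus a manifestly positive term''. The uniformity in $n$ for $A_n$ and $D_n$, which you yourself name as the obstacle, is exactly what is missing. Note also that $D_4=K_{1,3}$ is a star, so it must give equality as well, not only $P_3$.

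The paper does not work with the global defect here. It uses the rooted quantity $S_k(T,r)=w_{k+1}-2w_k+2f_k(r)$ from \eqref{eq:rooted-def}, whose sum over $r$ is $nw_{k+1}-2(n-1)w_k$. Its argument runs as follows:
\begin{enumerate}
\item It derives the exact root-deletion recurrences $\mathcal S_U=\sum_i\mathcal S_{U_i}+zg_U\mathcal C_U$ and $B_U=z^2g_U^2\mathcal C_U$.
\item It shows $B_{P_a}=\mathcal S_{P_a}=0$ for endpoint-rooted paths, and $K_{P_1,U},K_{P_2,U}\ge0$.
\item It uses the fact that in every Dynkin spider at most one arm has length greater than $2$. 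Walking down a leaf's arm then gives $S_k(T,r)\ge0$ at every leaf.
\item Lemma~\ref{lem:leaf-minimum} says that $f_k$ is minimized at a leaf, which transfers the leaf bound to every vertex $r$.
\end{enumerate}

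\textbf{The even-$k$ equality characterization has no mechanism.} Theorem~\ref{thm:root-positivity} yields only nonnegativity, so ``some coefficient is strictly positive'' is not available from it. Comparison with a $P_4$ subtree also fails, because $nw_{k+1}-2(n-1)w_k$ has no monotonicity under passage to subtrees.

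The missing idea is to keep the rooted form instead of citing only the summed inequality. This is also why the Dynkin case should be done rootedly rather than via $\sum_k w_kx^k$. The argument is:
\begin{enumerate}
\item Once $S_{2m}(T,r)\ge0$ holds for every $r$ and every tree, equality in the sum forces every $S_{2m}(T,r)=0$. For non-Dynkin trees, $S_{2m}(T,r)=(L_{\one}A^{2m}\one)_r$.
\item Then $f_{2m}(r)$ is constant, i.e.\ $A^{2m}\one=c\one$.
\item Since $A^2$ is positive semidefinite, this gives $A^2\one=c^{1/m}\one$.
\item At a leaf $v$ with neighbour $u$ one gets $\deg u=\sum_{x\sim u}\deg x$. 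Hence every neighbour of $u$ is a leaf, and $T$ is a star.
\end{enumerate}

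Finally, your star formulas are wrong. For $K_{1,m}$ one has $A^2\one=m\one$, so $w_{2j}=(m+1)m^j$ and $w_{2j+1}=2m^{j+1}$. With your displayed values the defect would be $(m+1)(1-m)m^{k/2}\neq0$.
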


Since a tree on \(n\) vertices has \(n-1\) edges,
\eqref{eq:main} is exactly the conjectured inequality. The case \(k=0\),
included in the convention of \cite{TWKHM13}, is an identity for every
tree. The equality statement shows that, for trees with at least three
vertices, equality occurs precisely for stars and positive even \(k\).

The proof is organized around a more general positivity problem. Let $A$ be the adjacency matrix of a finite connected bipartite simple graph and put $C=2I-A$. This matrix is the generalized Cartan matrix of the associated simply-laced Coxeter system. We use its standard geometric representation and the corresponding real roots; see, for example, \cite[Sections~5.3--5.7]{Humphreys90} and \cite{Fu12}. For a positive real root $\alpha$, let $s_\alpha$ be the corresponding reflection and put $L_\alpha=I+s_\alpha$. We study the polynomial positivity cone $\mathcal P_\alpha(A) = \{q\in\mathbb R[x]_{\mathrm{even}}: L_\alpha q(A)\alpha\ge0\}$, where $\mathbb R[x]_{\mathrm{even}}$ denotes the space of real even polynomials and vector inequalities are understood coordinatewise. The root-positivity criterion stated as Theorem~\ref{thm:root-positivity} below gives  explicit sufficient conditions for families of even polynomials to lie in $\mathcal P_\alpha(A)$. We emphasize that these are sufficient inclusion conditions; we do not claim to characterize the entire cone $\mathcal P_\alpha(A)$.

To state Theorem~\ref{thm:root-positivity}, we first introduce the
root-theoretic and polynomial notation appearing in its formulation. Let $\Phi^+$ and $\Phi^-$ denote the positive and negative real roots, and let $\mathcal N(s_\alpha)=\{\beta\in\Phi^+:s_\alpha\beta\in\Phi^-\}$. A connected simple graph is called finite Dynkin if it is of type $A_n$, $D_n$, $E_6$, $E_7$, or $E_8$, and non-Dynkin otherwise.

For each vertex $v$, let $s_v$ denote the associated simple reflection. For a bipartition $V=X\sqcup Y$, put $R_X=\prod_{v\in X}s_v$, $R_Y=\prod_{v\in Y}s_v$, and $\Omega=R_XR_Y$; the factors within each product commute. Define $p_{-1}(x)=0$, $p_0(x)=1$, and $p_{j+1}(x)=xp_j(x)-p_{j-1}(x)$ for $j\ge0$. We write $\mathbb R_{\ge0}[x^2]$ for the cone of even polynomials with nonnegative coefficients.
With this notation, Theorem~\ref{thm:root-positivity} can
be stated as follows.

\begin{theorem}[Root-positivity criterion]\label{thm:root-positivity}
Let $G$ be a finite connected bipartite non-Dynkin simple graph with adjacency matrix $A$, and let $\alpha\in\Phi^+$. Suppose either that $\Omega^j\alpha\in\Phi^-$ for some $j\in\mathbb Z$, or that every $\beta\in\mathcal N(s_\alpha)$ satisfies $0\le\beta\le\alpha$. Then $L_\alpha p_{2m}(A)\alpha\ge0$ for every $m\ge0$. Consequently, $\operatorname{cone}\{p_0,p_2,p_4,\ldots\}\subseteq\mathcal P_\alpha(A)$ and $\mathbb R_{\ge0}[x^2]\subseteq\mathcal P_\alpha(A)$.
\end{theorem}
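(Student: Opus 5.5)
The plan is to turn the Chebyshev-type polynomials $p_{2m}(A)$ into sums over a Coxeter orbit, so that $L_\alpha p_{2m}(A)\alpha$ becomes a sum of terms $\gamma+s_\alpha\gamma$ with $\gamma$ a real root. Positivity then reduces to controlling sign patterns of roots.

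\textbf{Step 1 (algebraic identity).} Write $P_X,P_Y$ for the coordinate projections onto $X$ and $Y$. Since $G$ is bipartite, $P_XAP_X=0$, and from $s_v x=x-(Cx)_v e_v$ one gets
\[
R_X=I-P_XC=P_Y-P_X+P_XA,\qquad R_Y=P_X-P_Y+P_YA .
\]
From this I would derive $R_X+R_Y=A+(\text{signed identity terms})$. The aim is a relation of the form $\Omega+\Omega^{-1}=A^2-2I$ on suitable combinations, together with an intertwining of $A$ with $R_X,R_Y$. The recurrence $p_{j+1}=xp_j-p_{j-1}$ is the Chebyshev $U$ recurrence, and $U_{2m}(2\cos\theta)=\sum_{|i|\le m}e^{2i\cdot i\theta}$ (the exponent is $2i\theta$, summed over $|i|\le m$). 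Mirroring this, I expect an identity
\[
p_{2m}(A)=\sum_{i=-m}^{m}\Omega^{i}\,E_i ,
\]
where each $E_i$ is $I$, $R_X$ or $R_Y$ according to the parity bookkeeping. The consequence would be that $p_{2m}(A)\alpha=\sum_i \gamma_i$, where each $\gamma_i$ is an element $w\alpha$ of the $W$-orbit of $\alpha$ lying in the Coxeter orbit $\{\Omega^j\alpha,\ \Omega^jR_Y\alpha\}$. I would prove this by induction on $m$ using $p_{2m+2}=(x^2-2)p_{2m}-p_{2m-2}$, which is itself a consequence of the three-term recurrence.

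\textbf{Step 2 (termwise reflection).} With $B(x,y)=\tfrac12x\trans Cy$ one has $s_\alpha\gamma=\gamma-2B(\alpha,\gamma)\alpha$, so
\[
L_\alpha p_{2m}(A)\alpha=\sum_i\bigl(\gamma_i+s_\alpha\gamma_i\bigr).
\]
Each summand has one of three forms. If $\gamma_i$ and $s_\alpha\gamma_i$ are both positive, the summand is $\ge0$. If both are negative, the summand is negative and must be absorbed. The mixed case, where one is positive and the other negative, corresponds to $\pm\gamma_i\in\mathcal N(s_\alpha)$.

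\textbf{Step 3 (using the hypotheses).} Under the second hypothesis, for $\beta\in\mathcal N(s_\alpha)$ we have $\beta+s_\alpha\beta=2\beta-2B(\alpha,\beta)\alpha$. The bound $0\le\beta\le\alpha$, combined with $B(\alpha,\beta)\le1$ for distinct real roots in a non-Dynkin simply-laced system (integrality and non-collinearity), should let the term $2\alpha$ coming from $i=0$ in $L_\alpha\alpha=2\alpha$ dominate the deficits. I would pair negative contributions symmetrically in $i\leftrightarrow -i$ and absorb them into the central term.

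Under the first hypothesis, I would use the non-Dynkin dichotomy for bipartite Coxeter elements. Because $\Omega$ has infinite order and the spectral radius of $A$ exceeds $2$, each orbit $\{\Omega^j\alpha\}$ changes sign exactly once. It is therefore preprojective or preinjective, meaning positive on a half-line of $j$ and negative on the complementary half-line. The sign pattern of the $\gamma_i$ is then monotone in $i$. Since $s_\alpha$ fixes $\alpha$ and makes a uniformly bounded correction, I expect the sum to telescope against $\gamma_0=\alpha$ into a nonnegative vector.

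\textbf{Step 4 (cone statements).} The last two assertions are then formal. $\mathcal P_\alpha(A)$ is a convex cone, so it contains $\operatorname{cone}\{p_{2m}\}$. Moreover, $x^{2m}$ is a nonnegative combination of $p_0,p_2,\ldots,p_{2m}$: the Chebyshev expansion of $(2\cos\theta)^{2m}$ in the $U_{2j}$ has nonnegative binomial-difference coefficients. Hence $\mathbb R_{\ge0}[x^2]\subseteq\mathcal P_\alpha(A)$.

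The main obstacle is Step 3, namely showing that the negative contributions $\gamma_i+s_\alpha\gamma_i$ are dominated. This requires exact control of $B(\alpha,\Omega^j\alpha)$ along the orbit. I would first try to prove that $j\mapsto B(\alpha,\Omega^j\alpha)$ is monotone on each side of $j=0$, using that $\Omega$ has a Perron eigenvalue $>1$.
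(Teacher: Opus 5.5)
\textbf{There is a genuine gap, in Step~3.} Your Step~1 is right: with every $E_i=I$ one has $p_{2m}(A)=\sum_{|i|\le m}\Omega^i$, because $\Omega^j+\Omega^{-j}=p_{2j}(A)-p_{2j-2}(A)$. Step~4 is also fine. Step~3, however, rests on a miscalculation: $s_\alpha\alpha=\alpha-\langle\alpha,\alpha\rangle\alpha=-\alpha$. So $L_\alpha\alpha=0$, not $2\alpha$, and $s_\alpha$ does not fix $\alpha$. The $i=0$ term contributes nothing. Both of your mechanisms therefore have nothing to absorb into: letting the central term dominate the deficits under hypothesis~2, and telescoping against $\gamma_0=\alpha$ under hypothesis~1.

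The auxiliary facts you lean on also fail for wild graphs. The bound $B(\alpha,\beta)\le1$ is false for distinct real roots: in $K_{1,5}$ with centre $c$ and $\alpha=e_c$, one gets $\Omega\alpha=4e_c+\sum_{\ell}e_\ell$ (sum over the leaves), so $B(\alpha,\Omega\alpha)=3/2$. The correction $\langle\alpha,\Omega^i\alpha\rangle\alpha$ is not bounded in $i$; in the same example $\langle\alpha,\Omega^i\alpha\rangle=2\cosh(2i\eta)$ with $2\cosh\eta=\sqrt5$. Finally, $\lambda=2$ does occur, for affine graphs, so ``spectral radius exceeds $2$'' is not available either.

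\textbf{What the two cases actually need.}

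If the whole orbit $\{\Omega^j\alpha\}$ is positive and hypothesis~2 holds, you must show that each pair $j,-j$ is nonnegative on its own.
\begin{itemize}
\item Since $\Omega$ is an isometry, $x=\Omega^j\alpha$ and $y=\Omega^{-j}\alpha$ share $b=\langle\alpha,x\rangle=\langle\alpha,y\rangle$. Hence $(L_\alpha(x+y))_r=2(x_r+y_r-b\alpha_r)$.
\item A positive root $\beta\notin\mathcal N(s_\alpha)$ satisfies $\beta\ge b\alpha$, because $s_\alpha\beta\ge0$. So the pair is fine unless both $x,y\in\mathcal N(s_\alpha)$.
\item In that remaining case, $0\le x,y\le\alpha$. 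Combined with the Perron-vector identity $h\trans(x+y)=(p_{2j}(\lambda)-p_{2j-2}(\lambda))h\trans\alpha\ge2h\trans\alpha$, this forces $x=y=\alpha$, where the pair contributes zero.
\end{itemize}

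If some $\Omega^j\alpha$ is negative, this case must be handled separately even under hypothesis~2. A term with $\gamma_i$ and $s_\alpha\gamma_i$ both negative is negative, and with $L_\alpha\alpha=0$ nothing absorbs it. The missing idea is as follows.
\begin{itemize}
\item Follow the colour reflections to the first sign change. The last positive root there is a simple root $e_v$, so $\alpha=He_v$ with $H=p_{t-1}(A)+p_{t-2}(A)$.
\item The identity $(A-2I)H^2=p_{2t-1}(A)-p_{2t-3}(A)-2I$, together with bipartiteness, gives $\langle\alpha,p_{2m}(A)\alpha\rangle=2(p_{2m}(A))_{vv}$.
\item Hence $(L_\alpha p_{2m}(A)\alpha)_r=2\sum_{u\ne v}H_{ru}(p_{2m}(A))_{uv}$.
\end{itemize}
Nonnegativity of this last expression comes from the entrywise positivity $p_j(A)\ge0$ for connected non-Dynkin graphs (Etingof--Eu). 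Your proposal never uses this input, and the orbit-sign bookkeeping does not supply it.
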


Theorem~\ref{thm:root-positivity} is the main structural result. It supplies the even-index inequality for non-Dynkin trees; the finite Dynkin cases and the odd-index inequality require the separate arguments described below. Together, these ingredients yield Theorem~\ref{thm:main}.

\subsection{Organization of the paper}

Section~\ref{sec:polynomial-cones} develops the Coxeter and polynomial tools used in the proof of Theorem~\ref{thm:root-positivity}. We first define the bilinear form, simple reflections, real roots, root signs, and reflections associated with arbitrary positive real roots. We then introduce the cone $\mathcal P_\alpha(A)$ and record the coordinate formula for $L_\alpha q(A)\alpha$. The polynomials $p_j$, defined by $p_{-1}(x)=0$, $p_0(x)=1$, and $p_{j+1}(x)=xp_j(x)-p_{j-1}(x)$, are related to the matrix Hilbert series of a preprojective algebra. The theorem of Etingof and Eu implies that $p_j(A)\ge0$ entrywise for every finite connected non-Dynkin graph \cite[Theorem~3.4.2]{EE07}. We also prove that every monomial $x^{2m}$ is a nonnegative linear combination of $p_0,p_2,\ldots,p_{2m}$.

Section~\ref{sec:root-positivity} proves the criterion stated as Theorem~\ref{thm:root-positivity} for positive real roots on finite connected bipartite non-Dynkin graphs. For a bipartition $V=X\sqcup Y$, we form the two commuting products of simple reflections $R_X$ and $R_Y$, and put $\Omega=R_XR_Y$. The key identities are $\Omega+\Omega^{-1}=A^2-2I$ and $\Omega^j+\Omega^{-j} = p_{2j}(A)-p_{2j-2}(A)$ for $j\ge1$. These identities yield a telescoping expansion of $p_{2m}(A)\alpha$.

The proof of Theorem~\ref{thm:root-positivity} has two parts. If some power of $\Omega$ sends $\alpha$ to a negative root, we follow the orbit until its first sign change. Reversing the preceding reflections produces a nonnegative matrix $H$ satisfying $He_v=\alpha$ for a suitable vertex $v$. For every $r\in V$ and $m\ge0$, the $r$th coordinate satisfies $\bigl(L_\alpha p_{2m}(A)\alpha\bigr)_r=2\sum_{u\in V\setminus\{v\}}H_{ru}\bigl(p_{2m}(A)\bigr)_{uv}$ and is therefore nonnegative. If the entire $\Omega$-orbit of $\alpha$ consists of positive roots, we instead use the hypothesis $\mathcal N(s_\alpha) \subseteq \{\beta\in\Phi^+:0\le\beta\le\alpha\}$. This gives a coordinate estimate for roots outside the inversion set. A positive Perron eigenvector then rules out the only remaining possible obstruction.

Section~\ref{sec:trees} applies Theorem~\ref{thm:root-positivity} to indicator vectors supported on connected induced trees in finite connected bipartite non-Dynkin graphs. For such a graph $T$ and every nonempty set $U\subseteq V(T)$ for which $T[U]$ is a tree, we show that its indicator vector $\one_U$ is a positive real root and that $\mathcal N(s_{\one_U})$ is the disjoint union of $\{\one_U\}$ and the sets $\{\one_{U_{e,1}},\one_{U_{e,2}}\}$ over all $e\in E(T[U])$, where $U_{e,1}$ and $U_{e,2}$ are the vertex sets of the two components of $T[U]-e$. In particular, every root in this inversion set is coordinatewise bounded above by $\one_U$, so Theorem~\ref{thm:root-positivity} applies. The walk interpretation converts this coordinatewise positivity statement into rooted even-index walk inequalities. When $T$ itself is a non-Dynkin tree, taking $U=V(T)$ yields the rooted even-index inequality, and summing it over all root vertices proves the even-index cases of Theorem~\ref{thm:main}.

Section~\ref{sec:finite} treats finite Dynkin trees, which are excluded from the non-Dynkin Hilbert-series argument. The proof uses rooted walk-generating functions and vertex-deletion recurrences to establish the required rooted inequalities for paths and the trees of types $D_n,E_6,E_7,E_8$. Resolvent identities of this type are classical \cite{Godsil92}; the particular recurrences required here are derived directly. The comparison of rooted walk counts is combined with the results of Cai, Zeng, and Zhang \cite{CZZ26}.

Finally, Section~\ref{sec:conclusion} completes the proof of Theorem~\ref{thm:main} and derives, as a further application, the $U$-restricted inequality. The first subsection treats the odd-index cases using the spectral covariance identity underlying the Sandwich Theorem \cite[Theorem~4]{TWKHM13}, determines their equality cases, and completes the equality analysis for positive even indices. The second subsection derives the global $U$-restricted walk inequality by combining polynomial positivity for even indices with the spectral covariance argument for odd indices.

\section{Coxeter reflections and polynomial positivity cones}
\label{sec:polynomial-cones}

Throughout, \(I\) denotes the identity matrix indexed by the relevant
index set; the index set will be clear from the context. For a vector \(x\), \(x_v\) denotes its coordinate at the index \(v\),
and \(e_v\) denotes the corresponding standard basis vector. For a
matrix \(M\), \(M_{uv}\) denotes its entry in row \(u\) and column \(v\).

\subsection{Reflections and real roots}\label{subsec:reflections}

Let \(G=(V,E)\) be a finite connected simple graph with adjacency matrix
\(A\). Set $C=2I-A$, define $\langle x,y\rangle=x\trans Cy$, and let
$Q(x)=\tfrac12\langle x,x\rangle$.
For each vertex $v$, define the simple reflection
$s_vx=x-\langle e_v,x\rangle e_v$.
It changes only the coordinate at $v$, replacing $x_v$ by
$\sum_{u\sim v}x_u-x_v$. In particular, $s_v=I-e_ve_v\trans(2I-A)$ is an integer matrix.
If $a=\langle e_v,x\rangle$ and $b=\langle e_v,y\rangle$, then
$\langle e_v,e_v\rangle=2$ gives
$\langle s_vx,s_vy\rangle=\langle x,y\rangle-ab-ab+2ab=\langle x,y\rangle$.
Moreover $\langle e_v,s_vx\rangle=-a$, whence $s_v^2=I$.
Thus each $s_v$ is invertible and preserves the form $C$:
$s_v\trans Cs_v=C$.

Let $\Gamma$ be the matrix group generated by the $s_v$. We call every
vector $\gamma e_v$, with $\gamma\in\Gamma$ and $v\in V$, a
\emph{real root}, and denote the set of all such vectors by $\Phi$.
Every real root is an integer vector with quadratic value one,
since $Q(\gamma e_v)=Q(e_v)=1$. In particular it is nonzero.
Also $-\gamma e_v=\gamma s_ve_v$, so $-\Phi=\Phi$.

Throughout, vector inequalities are understood coordinatewise.
Thus, \(x\ge0\) means that every coordinate of \(x\) is nonnegative.

\begin{proposition}[Real-root sign property]
\label{prop:root-sign}
Every \(\beta\in\Phi\) satisfies either $\beta\ge0$ or $\beta\le0$.
\end{proposition}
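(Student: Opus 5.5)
The plan is to prove the standard Kac/Humphreys statement: for each $\gamma\in\Gamma$ and vertex $v$, the vector $\gamma e_v$ is either $\ge0$ or $\le0$. I will follow the argument of \cite[Section~5.4]{Humphreys90} for the geometric representation of the Coxeter system. The Coxeter system here is $(W,S)$ with $S=\{s_v\}$, $m(u,v)=3$ if $u\sim v$, and $m(u,v)=2$ otherwise. The form $\langle\cdot,\cdot\rangle$ is exactly twice the Humphreys form $B(e_u,e_v)=-\cos(\pi/m(u,v))$. So $\Gamma$ is the image of the geometric representation, and real roots coincide with the Humphreys root system. I would make this identification explicit, or else reproduce the needed argument directly.

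\textbf{Key lemma.} I will prove that for $w\in\Gamma$ written as a word in the generators, and any vertex $v$, the following holds: if $\ell(ws_v)>\ell(w)$ then $we_v\ge0$, and if $\ell(ws_v)<\ell(w)$ then $we_v\le0$. Here $\ell$ is the length in the abstract Coxeter group $W$ mapping onto $\Gamma$. The proof is by induction on $\ell(w)$. If $\ell(w)>0$, pick $u$ with $\ell(ws_u)<\ell(w)$, so $u\ne v$. Consider the dihedral parabolic subgroup $W_{\{u,v\}}$ and write $w=w'\,w_I$ with $w_I\in W_{\{u,v\}}$, where $w'$ is chosen of minimal length in its coset, so that $\ell(w's_u)>\ell(w')$ and $\ell(w's_v)>\ell(w')$. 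The coset-length additivity $\ell(w)=\ell(w')+\ell_I(w_I)$ together with induction gives $w'e_u\ge0$ and $w'e_v\ge0$. It then remains to check the rank-two case: $w_Ie_v$ is a nonnegative combination of $e_u,e_v$ whenever $\ell_I(w_Is_v)>\ell_I(w_I)$. For $m=2$ this is immediate. For $m=3$ one lists the six elements and verifies directly, e.g.\ $s_u e_v=e_u+e_v$ and $s_vs_u e_v=e_u$. Applying $w'$ then gives $we_v\ge0$. The second case follows from the first applied to $ws_v$, since $we_v=-(ws_v)e_v$.

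\textbf{Main obstacle.} The main obstacle is the coset decomposition. This needs the abstract Coxeter-group facts: the exchange condition, or at least the existence of minimal coset representatives with additive length. Working only with the matrix group $\Gamma$, one lacks a presentation. I would therefore define $W$ abstractly by the Coxeter presentation and observe that the relations hold in $\Gamma$. For $m=3$, $(s_us_v)^3=I$ is checked on $\mathrm{span}\{e_u,e_v\}$ and on the $C$-orthogonal complement. If $C$ is degenerate, one checks instead on all basis vectors directly, since $s_us_v$ fixes every $e_w$-coordinate outside $\{u,v\}$ and the action on those two coordinates is an affine map of order 3. With that done, the induction runs over lengths in $W$, and one can cite \cite[Theorem~5.4]{Humphreys90}, whose proof is exactly this induction. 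Finally, every $\beta\in\Phi$ has the form $we_v$, so it falls under one of the two cases, which proves the proposition.
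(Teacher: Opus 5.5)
Your proposal is correct and follows the paper's route. Like the paper, you identify $\Gamma$ with the image of the geometric representation of the Coxeter system with $m_{uv}\in\{2,3\}$, check the Coxeter relations so that the abstract group maps onto $\Gamma$, and then invoke the standard root-sign dichotomy; you cite Humphreys's Theorem~5.4, while the paper cites Fu's Proposition~1.6(ii) after checking his conditions (C1)--(C2).

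Your extra sketch of the length induction is just the proof of the theorem you cite. The coset-additivity concern does not arise there: Humphreys chooses $w'$ of minimal length among factorizations $w=w'w_I$ with $\ell(w)=\ell(w')+\ell_I(w_I)$, so the exchange condition is not needed.
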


\begin{proof}
We verify the correspondence with the Coxeter-root-system framework
summarized in
\cite[Definition~1.1, Corollary~1.4, Definition~1.5, and
Proposition~1.6(ii)]{Fu12}. Fu attributes the root-basis definition
to Krammer~\cite{Krammer94} and the root-sign decomposition to
Howlett~\cite[Lectures~1 and~3]{Howlett96}.
Use the real space \(\mathbb R^V\), the simple-root set
$\Pi_0=\{e_v:v\in V\}$, and the normalized form
$(x,y)_0=\langle x,y\rangle/2$.
Then
\[
(e_v,e_v)_0=1,
\qquad
(e_u,e_v)_0=
\begin{cases}
-\frac12=-\cos(\pi/3),&u\sim v,\\
0=-\cos(\pi/2),&u\ne v,\ u\not\sim v.
\end{cases}
\]
These verify Fu's condition~(C1), with \(m_{uv}=3\) on edges
and \(m_{uv}=2\) on distinct nonedges. His positive linear cone
$\operatorname{PLC}(\Pi_0)$ consists of the vectors
$\sum_{v\in V}a_ve_v$ for which every $a_v$ is nonnegative and at
least one $a_v$ is positive.
It does not contain zero, verifying condition~(C2).

For clarity, the associated abstract Coxeter group is the presented
group
\[
W= \left\langle r_v\ (v\in V)\ \middle|\ r_v^2=1,\ (r_ur_v)^{m_{uv}}=1\ (u\ne v) \right\rangle.
\]
The assignment \(r_v\mapsto s_v\) defines a homomorphism
\(\varphi:W\to\Gamma\). Indeed, we have already shown that
\(s_v^2=I\). If \(u\) and \(v\) are nonadjacent, then
\(s_us_v=s_vs_u\), and hence \((s_us_v)^2=I\). If \(u\sim v\),
write $a=\langle e_u,x\rangle$ and $b=\langle e_v,x\rangle$.
Since \(\langle e_u,e_v\rangle=-1\), successive application gives
$s_vs_ux=x-ae_u-(a+b)e_v$,
and
$s_us_vs_ux=x-(a+b)e_u-(a+b)e_v$.
Interchanging \(u\) and \(v\) gives the same last expression. Thus
$s_us_vs_u=s_vs_us_v$ and $(s_us_v)^3=I$.
All defining relations therefore hold for the matrices.

The homomorphism \(\varphi\) is onto because its image contains every
generator of \(\Gamma\). Fu's linear reflection associated with
\(e_v\) is
$x\longmapsto x-2(x,e_v)_0e_v=s_vx$.
Hence the roots in his Definition~1.5 are exactly
$\{\varphi(w)e_v:w\in W,\ v\in V\} = \{\gamma e_v:\gamma\in\Gamma,\ v\in V\} = \Phi$.
Fu's Proposition~1.6(ii) gives a disjoint decomposition of the real
roots into positive and negative roots. Under the above identification,
the positive roots are precisely those lying in
\(\operatorname{PLC}(\Pi_0)\), hence those satisfying \(\beta\ge0\);
the negative roots are their negatives, hence those satisfying
\(\beta\le0\). This proves the proposition.
\end{proof}

An element \(\beta\in\Phi\) is called a positive root if
\(\beta\ge0\), and a negative root if \(\beta\le0\). We write
$\Phi^+=\{\beta\in\Phi:\beta\ge0\}$ and
$\Phi^-=\{\beta\in\Phi:\beta\le0\}$.
Since \(0\notin\Phi\), we have
$\Phi=\Phi^+\sqcup\Phi^-$.

For \(\alpha\in\Phi^+\), define
$s_\alpha x=x-\langle\alpha,x\rangle\alpha$.
Since \(\alpha\in\Phi^+\subseteq\Phi\), the definition of \(\Phi\)
ensures that there exist \(\gamma\in\Gamma\) and \(v\in V\) such that
$\alpha=\gamma e_v$.
Then
$\gamma s_v\gamma^{-1}x =x-\langle\gamma e_v,x\rangle\gamma e_v =x-\langle\alpha,x\rangle\alpha =s_\alpha x$.
Hence
$s_\alpha=\gamma s_v\gamma^{-1}\in\Gamma$.
For \(g\in\Gamma\), define its inversion set by
$\mathcal N(g):=\{\beta\in\Phi^+:g\beta\in\Phi^-\}$.
In particular, \(\mathcal N(s_\alpha)\) is the set of positive roots
sent to negative roots by the reflection \(s_\alpha\).

\subsection{The polynomial positivity cone}\label{subsec:cone}

For $\alpha\in\Phi^+$, define $L_\alpha=I+s_\alpha$ and
$\Pi_\alpha=\tfrac12L_\alpha=I-\tfrac12\alpha\alpha\trans C$.
Also put
$\alpha^{\perp_C}:=\{x\in\R^V:\langle\alpha,x\rangle=0\}$.
Because $\langle\alpha,\alpha\rangle=2$, the operator $\Pi_\alpha$
is the $C$-orthogonal projection onto $\alpha^{\perp_C}$ along
$\R\alpha$. Indeed, $\Pi_\alpha\alpha=0$, while
$\Pi_\alpha x=x$ for $x\in\alpha^{\perp_C}$, and hence
$\Pi_\alpha^2=\Pi_\alpha$.

The object studied in this paper is the cone
\[
\mathcal P_\alpha(A)
:=\{q\in\R[x]_{\mathrm{even}}:L_\alpha q(A)\alpha\geq0\},
\]
where \(\mathbb R[x]_{\mathrm{even}}\) denotes the set of
real polynomials of the form
$q(x)=\sum_{m=0}^{d}a_mx^{2m}$ with $a_m\in\mathbb R$.
The set
$\mathcal P_\alpha(A)$ is a convex cone: it contains zero and is
closed under addition and multiplication by nonnegative scalars.
The coordinate form of the defining condition is
\[
\bigl(L_\alpha q(A)\alpha\bigr)_r
 =2\bigl(q(A)\alpha\bigr)_r
  -\langle\alpha,q(A)\alpha\rangle\alpha_r,
 \qquad r\in V.
\]
We write \(\mathbb R_{\ge0}[x^2]\) for the cone of polynomials
$\sum_{m=0}^{d}a_mx^{2m}$ with $a_m\ge0$ and $d\in\mathbb N$.

\subsection{Nonnegative matrix polynomials}\label{subsec:matrix-polynomials}

Define polynomials in $\Z[x]$ by
\begin{equation}
 p_{-1}(x)=0,\qquad p_0(x)=1,\qquad
 p_{j+1}(x)=xp_j(x)-p_{j-1}(x)\quad(j\geq0).
 \label{eq:polynomials}
\end{equation}
Thus $p_1(x)=x$, and induction gives
$p_j(-x)=(-1)^jp_j(x)$ for $j\geq0$: multiplication by $x$
changes parity, and $p_{j-1}$ has the same parity as $p_{j+1}$. Here ``parity'' means that $p_j$ contains only even powers of $x$ when $j$ is even, and only odd powers when $j$ is odd.
For a square matrix $A$, polynomial evaluation uses $A^0=I$.
Throughout, a matrix inequality means an entrywise inequality.

A \emph{spider} is a tree with exactly one vertex of degree at least three, called its \emph{centre}. Its \emph{arms} are the paths from the centre to the leaves, and their lengths are measured in edges.

We call a connected simple graph \emph{finite Dynkin} if it is one of
$A_n$ ($n\geq1$), $D_n$ ($n\geq4$), $E_6$, $E_7$, or $E_8$.
Here $A_n$ is the path on $n$ vertices. The other graphs are spiders, which have one
vertex of degree three and three otherwise disjoint arms, whose
lengths, measured in edges, are $(1,1,n-3)$ for $D_n$, $(1,2,2)$ for
$E_6$, $(1,2,3)$ for $E_7$, and $(1,2,4)$ for $E_8$.
The term \emph{non-Dynkin} means that the graph is not in this list.

\subsubsection{Hilbert series of preprojective algebras}
\label{subsec:preprojective}

We specify the terminology needed to state the external theorem.
A finite \emph{quiver} \(\mathcal Q\) is a finite directed graph,
allowing parallel arrows and loops. Its vertex and arrow sets
are denoted by \(J\) and \(\mathcal Q_1\), and the tail and head
of an arrow \(a\) by \(t(a)\) and \(h(a)\).
Its \emph{adjacency matrix} is
$A_{\mathcal Q} := \bigl( |\{a\in\mathcal Q_1:t(a)=j,\ h(a)=i\}| \bigr)_{i,j\in J}$.
Thus parallel arrows are counted with multiplicity, and
\((A_{\mathcal Q})_{ii}\) is the number of loops at \(i\).

Connectedness refers to the underlying undirected multigraph.
A quiver is Dynkin precisely when that multigraph is one of
the simple graphs listed above; multiplicities and loops are
retained when testing this condition.
The \emph{double} \(\overline{\mathcal Q}\) is obtained by adding
a new arrow \(a^*:h(a)\to t(a)\) for each \(a\in\mathcal Q_1\),
even if an arrow in that direction is already present.
For a loop, \(a^*\) is also a distinct new loop.
Consequently,
$A_{\overline{\mathcal Q}} =A_{\mathcal Q}+A_{\mathcal Q}\trans$.

Over a field $\Bbbk$, the \emph{path algebra}
$\Bbbk\overline{\mathcal Q}$ has as a basis all directed paths,
including a length-zero path $\varepsilon_i$ at each vertex $i$.
Products are concatenations, with the right factor traversed first,
and are zero when the endpoints do not match. Multiplication extends
bilinearly. The elements $\varepsilon_i$ are orthogonal idempotents,
and $\sum_{i\in J}\varepsilon_i$ is the identity. In particular,
$R=\bigoplus_{i\in J}\Bbbk\varepsilon_i$
is the algebra of $\Bbbk$-valued functions on $J$, with pointwise
multiplication. The \emph{preprojective algebra} is
$\Pi_{\mathcal Q}(\Bbbk)=\Bbbk\overline{\mathcal Q}/(\rho)$, where
$\rho=\sum_{a\in\mathcal Q_1}(aa^*-a^*a)$ and $(\rho)$ is the
two-sided ideal of finite sums of terms
$u\rho v$. Equivalently, its defining relations are
$\rho_i=\sum_{h(a)=i}aa^*-\sum_{t(a)=i}a^*a=0$ for $i\in J$.
Indeed, $\rho_i=\varepsilon_i\rho\varepsilon_i$ and
$\rho=\sum_i\rho_i$, so these elements generate the same ideal.

Path length gives the path algebra a grading
$\Bbbk\overline{\mathcal Q}=\bigoplus_{d\geq0}
(\Bbbk\overline{\mathcal Q})[d]$, with products adding degrees.
Each $\rho_i$ has degree two. To see explicitly that the quotient
inherits this grading, write an element of the relation ideal as
a finite sum $\sum_\ell c_\ell u_\ell\rho_{i_\ell}v_\ell$, with
$u_\ell,v_\ell$ paths. Its degree-$d$ component is the sum over
the indices satisfying $|u_\ell|+2+|v_\ell|=d$, and hence still
belongs to the relation ideal. Consequently
$\Pi_{\mathcal Q}(\Bbbk)[d] \cong (\Bbbk\overline{\mathcal Q})[d] \big/\bigl((\rho)\cap(\Bbbk\overline{\mathcal Q})[d]\bigr)$.
These spaces are finite dimensional because the quiver is finite
and only finitely many paths have a given length. The degree-zero
part is $R$, since the relation ideal has no nonzero component
of degree less than two.

Left and right multiplication by $R$ commute, making each graded
component an $R$-bimodule. The orthogonal idempotents give its
direct-sum decomposition into the spaces
$\varepsilon_i\Pi_{\mathcal Q}(\Bbbk)[d]\varepsilon_j$.
Before quotienting, this $(i,j)$-space consists of paths from $j$
to $i$. The \emph{matrix Hilbert series} recording the dimensions of
these spaces is $h_{\Pi_{\mathcal Q}}(t)=\sum_{d\geq0}H_dt^d$, where
$(H_d)_{ij}=\dim_{\Bbbk}\bigl(\varepsilon_i\Pi_{\mathcal Q}(\Bbbk)[d]\varepsilon_j\bigr)$.
In particular, $H_0=I$ and every entry of every $H_d$ is a
nonnegative integer, independently of the characteristic of $\Bbbk$.

All power series in this subsection are formal. Multiplication of
matrix series is defined by the finite convolution
$[t^d]\left(\sum_{a\geq0}M_at^a\right) \left(\sum_{b\geq0}N_bt^b\right) =\sum_{a=0}^d M_aN_{d-a}$.
For a fixed matrix $D$, the expression $(I-Dt+It^2)^{-1}$ is
well-defined: writing $X=Dt-It^2$, the series
$\sum_{a\geq0}X^a$ is a two-sided inverse of $I-X$.
For each coefficient only finitely many summands contribute,
so the finite geometric-sum identity proves both inverse identities.

The following theorem gives the Hilbert series in the form required below \cite[Theorem~3.4.2]{EE07}.

\begin{theorem}[Etingof--Eu]\label{thm:etingof-eu}
Let \(\mathcal Q\) be a connected non-Dynkin quiver.
Then the matrix Hilbert series of its preprojective algebra
over an arbitrary field is
$h_{\Pi_{\mathcal Q}}(t) = \bigl(I-A_{\overline{\mathcal Q}}t+t^2I\bigr)^{-1}$.
\end{theorem}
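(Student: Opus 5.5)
Since Theorem~\ref{thm:etingof-eu} is quoted from \cite[Theorem~3.4.2]{EE07}, the paper does not need to reprove it; if I had to prove it, I would go through the standard Koszul-type resolution of the vertex simples. Let \(V\) be the \(R\)-bimodule spanned by the arrows of \(\overline{\mathcal Q}\), write \(\Pi=\Pi_{\mathcal Q}(\Bbbk)\), and let \(S_j=\Bbbk\varepsilon_j\), viewed as a graded right module concentrated in degree zero. The plan is to show that, for \(\mathcal Q\) connected and non-Dynkin, the complex of graded right \(\Pi\)-modules
\[
0\longrightarrow \varepsilon_j\Pi[-2]\xrightarrow{\ \partial_2\ }\bigoplus_{a:\,t(a)=j\text{ in }\overline{\mathcal Q}}\varepsilon_{h(a)}\Pi[-1]\xrightarrow{\ \partial_1\ }\varepsilon_j\Pi\longrightarrow S_j\longrightarrow0
\]
is exact. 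The maps are as follows: \(\partial_1\) multiplies by the arrows, and \(\partial_2\) is built from the relation \(\rho_j\), sending \(x\) to the tuple of components of \(\rho_j x\) with the signs \(\pm\) coming from \(aa^*-a^*a\).

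Once exactness holds, taking graded dimensions of each \((j,i)\) component gives, for every degree \(d\),
\[
H_d-A_{\overline{\mathcal Q}}H_{d-1}+H_{d-2}=\delta_{d,0}I ,
\]
using the transpose-symmetry \(A_{\overline{\mathcal Q}}=A_{\overline{\mathcal Q}}\trans\). This says exactly that \((I-A_{\overline{\mathcal Q}}t+t^2I)\,h_{\Pi_{\mathcal Q}}(t)=I\). By the formal inverse discussed just before the theorem, \(h_{\Pi_{\mathcal Q}}(t)\) is the claimed inverse. Because only dimensions enter, the argument works over an arbitrary field as soon as exactness is proved over that field.

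Exactness at \(S_j\) and at \(\varepsilon_j\Pi\) is formal: the augmentation kernel is the ideal generated by the arrows. Exactness in the middle follows from the fact that \(\rho\) generates all relations, since \(\Pi\) is quadratic with the single relation per vertex \(\rho_i\). The real content, and the main obstacle, is injectivity of \(\partial_2\). This says that there is no nonzero \(x\in\varepsilon_j\Pi\) with \(ax=0\) for every arrow \(a\) starting at \(j\); in other words, \(\Pi\) has no socle element at \(j\). This step is where the non-Dynkin hypothesis enters, since for Dynkin quivers \(\Pi\) is finite dimensional and \(\partial_2\) fails to be injective in top degree.

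To prove injectivity, I would first try a Gröbner-basis argument. Order paths suitably, choose one leading monomial in each \(\rho_i\) (for instance \(a^*a\) for a chosen arrow at \(i\)), and show that the relations form a Gröbner basis when the quiver is non-Dynkin. Then the normal-form paths give an explicit basis, and left multiplication by a suitable arrow is visibly injective on normal forms. If the overlap check is unwieldy, I would instead reduce to extended Dynkin subquivers. Every connected non-Dynkin quiver contains one, and for it the McKay correspondence identifies \(\Pi\) with a skew group algebra \(\Bbbk[u,v]\rtimes\Gamma\), which is torsion-free, at least in good characteristic. I would then propagate injectivity to the larger quiver by a filtration argument; the case of small characteristic would need the Gröbner approach anyway.
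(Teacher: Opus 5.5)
\textbf{There is a genuine gap: the injectivity of $\partial_2$, which is the whole content of the theorem, is left unproved, and neither proposed route establishes it.}

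Your first sentence matches the paper. It gives no proof of Theorem~\ref{thm:etingof-eu}, imports it from \cite[Theorem~3.4.2]{EE07}, and uses it only over $\C$.

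Your reduction is correct, and in fact sharp. The complex is automatically exact except at its left end, since $\Pi_{\mathcal Q}$ is quadratic. So the Euler characteristic of the $(j,i)$-component in degree $d$ shows that $H_d-A_{\overline{\mathcal Q}}H_{d-1}+H_{d-2}-\delta_{d,0}I$ is exactly the dimension matrix of $\ker\partial_2$ in that degree. The theorem is therefore \emph{equivalent} to injectivity of $\partial_2$, and everything rests on the step you leave as a plan.

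\textbf{The Gr\"obner route fails for trees.} Under any admissible order, the leading word of $\rho_i$ is a $2$-cycle $i\to k_i\to i$ along an incident edge. A tree has $n$ vertices and only $n-1$ edges, so some edge $\{i,k\}$ is chosen from both ends. With $c\colon i\to k$ and $b\colon k\to i$, the leading words are $bc$ and $cb$. The overlap $bcb$ then produces $\pm\rho_ib\mp b\rho_k$. This is a signed sum of the distinct paths $k\to i\to k'\to i$ ($k'\ne k$) and $k\to i'\to k\to i$ ($i'\ne i$), and none of these contains a leading word. Hence for every tree with at least three vertices the quadratic relations are \emph{never} a Gr\"obner basis. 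You would have to compute and control the full completion before any injectivity becomes ``visible''. For a simple graph containing a cycle, the vertices can choose distinct edges, there are no ambiguities, and your argument does go through. But non-Dynkin trees are precisely the case this paper needs.

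\textbf{The fallback does not close the gap either.}
\begin{itemize}
\item The McKay identification covers only extended Dynkin quivers, and only when $\Bbbk\Gamma$ is semisimple; for instance it fails in characteristic $2$ for every $\widetilde D_n$. The statement is over an arbitrary field, and your Gr\"obner backup fails for $\widetilde D_n,\widetilde E_6,\widetilde E_7,\widetilde E_8$ by the argument above.
\item More seriously, the ``filtration argument'' carrying injectivity from an extended Dynkin subquiver $\mathcal Q'$ to $\mathcal Q$ is not specified, and it is not formal. The inclusion of arrows does not induce an algebra map $\Pi_{\mathcal Q'}\to\Pi_{\mathcal Q}$, because the relation at a boundary vertex of $\mathcal Q'$ is not a relation in $\Pi_{\mathcal Q}$.
\item The natural map goes the other way: a surjection $\Pi_{\mathcal Q}\to\Pi_{\mathcal Q'}$ killing the outside idempotents. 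It only shows that a socle element of $\Pi_{\mathcal Q}$ lies in the ideal generated by those idempotents, not that it vanishes.
\end{itemize}

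As it stands, your proposal is a correct reformulation plus an unproved key step. For this paper, the theorem should simply be cited, as the authors do.
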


The graph-theoretic consequence needed here is a separate statement.

\begin{proposition}
\label{prop:matrix-positive}
Let $A$ be the adjacency matrix of a finite, nonempty, connected
simple graph that is not finite Dynkin. Then
$p_j(A)\geq0$ entrywise for every $j\geq0$.
\end{proposition}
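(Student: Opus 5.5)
The plan is to apply Theorem~\ref{thm:etingof-eu} to a quiver built from the graph and then identify the coefficients of the formal inverse with the polynomials $p_j$.

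\emph{Step 1: the quiver.} Let $G=(V,E)$ be the graph, and orient each edge of $G$ arbitrarily to obtain a quiver $\mathcal Q$ with vertex set $J=V$. Because each edge is oriented exactly once, $A_{\mathcal Q}+A_{\mathcal Q}\trans=A$. Hence $A_{\overline{\mathcal Q}}=A$ by the doubling identity recorded above. The underlying undirected multigraph of $\mathcal Q$ is $G$ itself, so $\mathcal Q$ is connected and non-Dynkin, and Theorem~\ref{thm:etingof-eu} applies. It gives
\[
\sum_{d\ge0}H_dt^d=(I-At+t^2I)^{-1}
\]
with every $H_d$ entrywise nonnegative.

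\emph{Step 2: coefficients of the inverse.} Consider the formal series $S(t)=\sum_{j\ge0}p_j(A)t^j$. The coefficient of $t^d$ in $(I-At+t^2I)S(t)$ is
\[
p_d(A)-Ap_{d-1}(A)+p_{d-2}(A),
\]
with the conventions $p_{-1}=0$ and $p_{-2}:=0$. For $d=0$ this equals $I$. For $d=1$ it equals $A-A=0$. For $d\ge2$ it vanishes by the recurrence~\eqref{eq:polynomials}, since all the $p_j(A)$ are polynomials in $A$ and therefore commute with $A$.

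Thus $S(t)$ is a right inverse of $I-At+t^2I$. The excerpt shows that the geometric series $\sum_a X^a$ is a two-sided inverse of $I-X$, so the inverse is unique and $S(t)=h_{\Pi_{\mathcal Q}}(t)$. Comparing coefficients gives $p_j(A)=H_j\ge0$ entrywise for every $j\ge0$.

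\emph{Main obstacle.} The only delicate point is checking the hypotheses of Theorem~\ref{thm:etingof-eu}. Specifically, one must confirm that the paper's convention for the adjacency matrix (entry $(i,j)$ counts arrows $j\to i$) makes the doubled quiver's matrix equal to $A$ rather than its transpose. Since $A$ is symmetric, this causes no difficulty. One must also confirm that "Dynkin" for the quiver matches the graph notion, which holds because $G$ is simple, so no loops or multiplicities arise.

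The single-vertex graph $A_1$ is Dynkin, so the hypothesis excludes it automatically and no degenerate case needs separate treatment.
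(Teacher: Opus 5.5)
Your proposal is correct and follows essentially the same route as the paper: orient each edge once so that $A_{\overline{\mathcal Q}}=A$, apply Theorem~\ref{thm:etingof-eu}, and identify the Hilbert-series coefficients $H_d$ with $p_d(A)$. The only difference is cosmetic. You check that $\sum_j p_j(A)t^j$ is a right inverse of $I-At+t^2I$ and invoke uniqueness of the two-sided inverse, whereas the paper reads off $H_d-AH_{d-1}+H_{d-2}=0$ from the Hilbert series and concludes by induction.
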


\begin{proof}
Orient each edge of the graph exactly once to obtain a finite quiver
$\mathcal Q$, and take $\Bbbk=\C$. Its underlying undirected graph
is the given graph, so the connectedness and non-Dynkin hypotheses
of the quoted theorem hold. The adjacency matrix of
$\overline{\mathcal Q}$ is exactly $A$: each original edge produces
one arrow in each direction, and hence one entry in each of the
two corresponding ordered matrix positions.

Let $H_d$ be the dimension matrices just defined. The theorem gives
$(I-At+It^2)h_{\Pi_{\mathcal Q}}(t)=I$.
The constant and linear coefficients give $H_0=I$ and $H_1=A$.
For every $d\geq2$, coefficient comparison gives
$H_d-AH_{d-1}+H_{d-2}=0$.
Polynomial evaluation in \eqref{eq:polynomials} gives the same
initial values and recurrence for $p_d(A)$. Induction therefore
yields $H_d=p_d(A)$ for all $d\geq0$. More explicitly,
$(p_d(A))_{ij} =\dim_{\C}\bigl( \varepsilon_i\Pi_{\mathcal Q}(\C)[d]\varepsilon_j\bigr) \in\Z_{\geq0}$.
This proves the proposition; also $p_{-1}(A)=0$ by definition.
\end{proof}
\begin{lemma}\label{lem:nonnegative-expansion}
For every \(m\ge0\) there are integers \(c_{m,j}\ge0\), \(0\le j\le m\),
such that
\[
x^{2m}=\sum_{j=0}^m c_{m,j}p_{2j}(x).
\]
\end{lemma}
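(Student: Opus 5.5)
The plan is to prove the stronger statement that $x^{2}p_{2j}(x)$ is a nonnegative integer combination of $p_{2j-2},p_{2j},p_{2j+2}$, and then to induct on $m$. From the recurrence \eqref{eq:polynomials} we have $xp_j=p_{j+1}+p_{j-1}$ for every $j\ge0$, using the convention $p_{-1}=0$. This identity is just the defining recurrence rewritten, so it holds as a polynomial identity, including at $j=0$ where it reads $x\cdot1=p_1+0$. Applying it twice gives
\[
x^2p_{2j}=x\,(p_{2j+1}+p_{2j-1})=p_{2j+2}+p_{2j}+p_{2j}+p_{2j-2}=p_{2j+2}+2p_{2j}+p_{2j-2}
\]
for $j\ge1$. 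At $j=0$ we have $x p_0=p_1$ and $xp_1=p_2+p_0$, so $x^2p_0=p_2+p_0$. In every case the coefficients are nonnegative integers, the only exception to the generic pattern being that $p_{-2}$ never appears. One has to be careful not to use the recurrence at $j=-1$, which would produce $p_{-2}$. The direct computation at $j=0$ avoids this.

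The induction then runs as follows. The base case $m=0$ is $x^0=p_0$, so $c_{0,0}=1$. Suppose $x^{2m}=\sum_{j=0}^m c_{m,j}p_{2j}$ with integers $c_{m,j}\ge0$. Multiplying by $x^2$ and substituting the expansion above for each $x^2p_{2j}$ gives an integer combination of $p_0,\dots,p_{2m+2}$. Each new coefficient is a sum of products of nonnegative integers, so it is a nonnegative integer. Explicitly,
\[
c_{m+1,j}=c_{m,j-1}+2c_{m,j}+c_{m,j+1}\quad(j\ge1),\qquad c_{m+1,0}=c_{m,0}+c_{m,1},
\]
with the convention $c_{m,j}=0$ outside $0\le j\le m$.

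There is no real obstacle here. The only point needing care is the boundary index $j=0$, which is handled separately as above. If desired, one can also give the closed form $c_{m,j}=\binom{2m}{m-j}-\binom{2m}{m-j-1}$, a ballot number. This follows from the Chebyshev identification $p_j(2\cos\theta)=\sin((j+1)\theta)/\sin\theta$, but the inductive argument suffices.
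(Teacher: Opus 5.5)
Your proof is correct and follows the paper's argument: the same two identities, $x^2p_0=p_2+p_0$ and $x^2p_{2j}=p_{2j+2}+2p_{2j}+p_{2j-2}$ for $j\ge1$, with the same separate treatment of $j=0$ to avoid $p_{-2}$, followed by induction on $m$. Your explicit coefficient recursion and the ballot-number closed form $c_{m,j}=\binom{2m}{m-j}-\binom{2m}{m-j-1}$ are also correct, but the lemma does not need them.
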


\begin{proof}
The case \(m=0\) is \(1=p_0(x)\).
The defining recurrence gives
$x^2p_0(x)=p_2(x)+p_0(x)$
and, for \(j\ge1\),
$x^2p_{2j}(x)=p_{2j+2}(x)+2p_{2j}(x)+p_{2j-2}(x)$.
Indeed, apply \(xp_a=p_{a+1}+p_{a-1}\) twice; the separate formula
at \(j=0\) avoids any use of \(p_{-2}\).
Multiplying such an expansion by \(x^2\)
and using these formulas produces only nonnegative integer
coefficients, with indices from \(0\) to \(2m+2\).
This completes the induction.
\end{proof}

The recurrence also gives
$x^{2m}\in\operatorname{cone}\{p_0,p_2,\ldots,p_{2m}\}$ for every
$m\ge0$. Hence every polynomial in $\mathbb R_{\ge0}[x^2]$ belongs to
the cone generated by $p_0,p_2,p_4,\ldots$.

\section{A positivity criterion for positive real roots}
\label{sec:root-positivity}
Throughout this section, \(T=(V,E)\) denotes a graph with adjacency
matrix \(A\). When \(T\) is bipartite, we write
\(V=X\sqcup Y\) for a fixed bipartition.

For later use, the recurrence defining \(p_j\) gives
$p_a(2\cos\theta)=\sin((a+1)\theta)/\sin\theta$ whenever $a\ge-1$ and
$0<\theta<\pi$.
Indeed, the right-hand side has the prescribed initial values and
satisfies the same recurrence.

\begin{lemma}\label{lem:two-colour}
Let \(T\) be a finite connected bipartite simple graph with bipartition
\(V=X\sqcup Y\). Write the coordinates in the order \(X,Y\), so that
the adjacency matrix of $T$ has the form
\[
A=\begin{pmatrix}0&M\\M\trans&0\end{pmatrix}
\]
Let \(R_X\) and \(R_Y\) be the products of the simple reflections in
the vertices of \(X\) and \(Y\), respectively. Then
\[
R_X=\begin{pmatrix}-I&M\\0&I\end{pmatrix},
\qquad
R_Y=\begin{pmatrix}I&0\\M\trans&-I\end{pmatrix}.
\]
If \(\Omega=R_XR_Y\), then \(\Omega^{-1}=R_YR_X\) and
$\Omega+\Omega^{-1}=A^2-2I$.
Moreover, for every \(j\ge1\),
$\Omega^j+\Omega^{-j} = p_{2j}(A)-p_{2j-2}(A)$.
Consequently, for every vector \(\alpha\) and every \(m\ge0\),
\begin{equation}\label{eq:alpha-expansion}
p_{2m}(A)\alpha
=
\alpha+\sum_{j=1}^m
(\Omega^j+\Omega^{-j})\alpha.
\end{equation}
\end{lemma}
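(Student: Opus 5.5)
We compute everything in block form and reduce the trigonometric-looking identities to the Chebyshev-type recurrence for $p_j$.

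\textbf{Block forms of $R_X$ and $R_Y$.} For $v\in X$, the reflection $s_v$ replaces $x_v$ by $\sum_{u\sim v}x_u-x_v$. Since all neighbours of $v$ lie in $Y$, the reflections $s_v$ with $v\in X$ commute: each changes only its own coordinate, and that coordinate is not read by the others. Their product therefore replaces the $X$-block $x_X$ by $-x_X+Mx_Y$ and leaves $x_Y$ fixed, which gives
\[
R_X=\begin{pmatrix}-I&M\\0&I\end{pmatrix}.
\]
Symmetrically, $R_Y$ replaces $x_Y$ by $M\trans x_X-x_Y$. Both matrices are involutions (check $R_X^2=I$ directly), so $\Omega^{-1}=R_Y^{-1}R_X^{-1}=R_YR_X$.

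\textbf{The first identity.} Multiplying the blocks gives
\[
\Omega=\begin{pmatrix}-I+MM\trans&-M\\M\trans&-I\end{pmatrix},\qquad
\Omega^{-1}=\begin{pmatrix}-I&M\\-M\trans&M\trans M-I\end{pmatrix}.
\]
Adding these yields $\operatorname{diag}(MM\trans-2I,\,M\trans M-2I)$, which equals $A^2-2I$ because $A^2=\operatorname{diag}(MM\trans,M\trans M)$.

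\textbf{The identity for $\Omega^j+\Omega^{-j}$.} Put $S_j=\Omega^j+\Omega^{-j}$, with $S_0=2I$ and $S_1=A^2-2I$. Since $\Omega$ and $\Omega^{-1}$ commute,
\[
S_1S_j=S_{j+1}+S_{j-1}\qquad(j\ge1).
\]
Set $D_j=p_{2j}(A)-p_{2j-2}(A)$. We have $D_1=p_2(A)-I=A^2-2I=S_1$. For $D_2$: from $p_4=x^4-3x^2+1$ we get $D_2=A^4-4A^2+2I$, while $S_2=S_1^2-S_0=(A^2-2I)^2-2I$ is the same. The main step is to show that $D_j$ satisfies the same recurrence, namely
\[
(x^2-2)\bigl(p_{2j}-p_{2j-2}\bigr)=\bigl(p_{2j+2}-p_{2j}\bigr)+\bigl(p_{2j-2}-p_{2j-4}\bigr)
\]
as polynomials, for $j\ge1$ with $p_{-2}=-p_0$ (this convention is consistent with the recurrence and makes $D_0=p_0-p_{-2}=2$). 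This follows from the relation $x^2p_{2j}=p_{2j+2}+2p_{2j}+p_{2j-2}$ recorded in the proof of Lemma~\ref{lem:nonnegative-expansion}, valid for $j\ge1$: it says $(x^2-2)p_{2j}=p_{2j+2}+p_{2j-2}$, and subtracting the same relation at index $j-1$ gives the claim. At $j=1$ the relation at index $j-1=0$ reads $(x^2-2)p_0=p_2+p_{-2}$, which holds with $p_{-2}=-1$. Alternatively, substitute $x=2\cos\theta$: then $p_{2j}-p_{2j-2}=2\cos(2j\theta)$ by the sine formula stated above, the polynomial identity becomes a trigonometric identity on infinitely many points, and it therefore holds identically.

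Since $S_j$ and $D_j$ satisfy the same second-order recurrence with equal initial terms $S_0=D_0=2I$ and $S_1=D_1$, induction gives $S_j=D_j$ for all $j\ge1$.

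\textbf{The expansion \eqref{eq:alpha-expansion}.} Telescoping gives
\[
p_{2m}(A)=p_0(A)+\sum_{j=1}^m\bigl(p_{2j}(A)-p_{2j-2}(A)\bigr)=I+\sum_{j=1}^m\bigl(\Omega^j+\Omega^{-j}\bigr).
\]
Applying both sides to $\alpha$ yields \eqref{eq:alpha-expansion}.

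The only delicate point is the boundary case $j=1$ of the recurrence. Using the convention $p_{-2}=-1$, or simply checking $S_2=D_2$ directly as above, avoids any gap there.
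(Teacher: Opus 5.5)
Your proof is correct and follows the paper's route: you do the same block computation of $R_X$, $R_Y$ and $\Omega^{\pm1}$, and you match the recurrence $S_{j+1}=(A^2-2I)S_j-S_{j-1}$ against $p_{2j}-p_{2j-2}$ with initial term $2$ (your $p_0-p_{-2}$ is the paper's $q_0=2$), followed by induction and telescoping. The only variation is how the polynomial recurrence is verified: you do it algebraically via $(x^2-2)p_{2j}=p_{2j+2}+p_{2j-2}$, whereas the paper substitutes $x=2\cos\theta$ and uses the cosine addition formula, which you also mention as an alternative.
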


\begin{proof}
Reflections at vertices of the same colour commute, because they
alter different coordinates and each altered coordinate depends only
on the opposite colour. Writing vectors in the order \(X,Y\), we have
\[
R_X
=
\begin{pmatrix}
-I&M\\
0&I
\end{pmatrix},
\qquad
R_Y
=
\begin{pmatrix}
I&0\\
M\trans&-I
\end{pmatrix}.
\]
Indeed,
\[
R_X
\begin{pmatrix}x\\y\end{pmatrix}
=
\begin{pmatrix}My-x\\y\end{pmatrix},
\qquad
R_Y
\begin{pmatrix}x\\y\end{pmatrix}
=
\begin{pmatrix}x\\M\trans x-y\end{pmatrix}.
\]
The preceding formulas give
\[
\begin{aligned}
\Omega=R_XR_Y
&=
\begin{pmatrix}
MM\trans-I&-M\\
M\trans&-I
\end{pmatrix},\\
\Omega^{-1}=R_YR_X
&=
\begin{pmatrix}
-I&M\\
-M\trans&M\trans M-I
\end{pmatrix}.
\end{aligned}
\]
The second identity uses \(R_X^2=R_Y^2=I\). Adding the two matrices
gives $\Omega+\Omega^{-1}=A^2-2I$, which proves the first asserted
identity.
Set \(q_0(x)=2\) and
\(q_j(x)=p_{2j}(x)-p_{2j-2}(x)\) for \(j\ge1\).
By the sine representation of \(p_a\) above,
$q_j(2\cos\theta)=2\cos(2j\theta)$ for every $j\ge0$.
The cosine addition formula therefore gives
$q_{j+1}(x)=(x^2-2)q_j(x)-q_{j-1}(x)$ for $j\ge1$.
Now set $S_j=\Omega^j+\Omega^{-j}$ for $j\ge0$.
Using \(\Omega+\Omega^{-1}=A^2-2I\), for \(j\ge1\) we obtain
\[
\begin{aligned}
(A^2-2I)S_j
&=(\Omega+\Omega^{-1})(\Omega^j+\Omega^{-j})\\
&=\Omega^{j+1}+\Omega^{j-1}
  +\Omega^{-(j-1)}+\Omega^{-(j+1)}\\
&=S_{j+1}+S_{j-1}.
\end{aligned}
\]
Hence
$S_{j+1}=(A^2-2I)S_j-S_{j-1}$.
On the other hand, evaluating the recurrence for \(q_j\) at \(A\)
gives
$q_{j+1}(A)=(A^2-2I)q_j(A)-q_{j-1}(A)$.
The initial values $S_0=2I=q_0(A)$ and
$S_1=\Omega+\Omega^{-1}=A^2-2I=q_1(A)$ also agree.
Therefore, by induction,
$\Omega^j+\Omega^{-j}=q_j(A)=p_{2j}(A)-p_{2j-2}(A)$ for every
$j\ge1$.
This proves the second asserted identity. Telescoping it gives
\eqref{eq:alpha-expansion}.
\end{proof}

\begin{lemma}\label{lem:positive-eigenvector}
Let \(A\) be the adjacency matrix of a finite connected non-Dynkin
simple graph $T$. The largest eigenvalue \(\lambda\) of \(A\) satisfies
\(\lambda\ge2\) and has an eigenvector \(h>0\). For every \(j\ge1\),
$p_{2j}(\lambda)-p_{2j-2}(\lambda)\ge2$.
\end{lemma}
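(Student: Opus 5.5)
The plan has three parts: a Perron–Frobenius argument for the eigenvector, a spectral argument for \(\lambda\ge2\), and then a direct evaluation of \(p_{2j}-p_{2j-2}\) at \(\lambda\).

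\emph{Positive eigenvector.} Since \(A\) is a nonnegative, symmetric, irreducible matrix (the graph is connected), the Perron--Frobenius theorem gives the largest eigenvalue \(\lambda\) a strictly positive eigenvector \(h>0\). For a single vertex, \(A=0\), but that graph is \(A_1\), which is Dynkin, so this case does not arise.

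\emph{The bound \(\lambda\ge2\).} I would argue by contradiction using Proposition~\ref{prop:matrix-positive}. Suppose \(\lambda<2\). Every eigenvalue of \(A\) lies in \([-\lambda,\lambda]\subset(-2,2)\), so we may write \(\lambda=2\cos\theta\) with \(0<\theta<\pi/2\) (note \(\lambda>0\) because the graph has an edge). By the sine representation recorded before Lemma~\ref{lem:two-colour},
\[
h\trans p_j(A)h=p_j(\lambda)\,h\trans h=\frac{\sin((j+1)\theta)}{\sin\theta}\,h\trans h .
\]
Choose \(j\) with \((j+1)\theta\in(\pi,2\pi)\); such a \(j\) exists because \(\theta<\pi/2\), so consecutive multiples of \(\theta\) cannot jump over an interval of length \(\pi\). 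For this \(j\) the right-hand side is negative. On the other hand, \(p_j(A)\ge0\) entrywise by Proposition~\ref{prop:matrix-positive}, and \(h>0\), so \(h\trans p_j(A)h\ge0\). This contradiction gives \(\lambda\ge2\). This step is the only real content; the alternative route through forbidden-subgraph classification (Smith graphs) is avoided entirely.

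\emph{The inequality \(p_{2j}(\lambda)-p_{2j-2}(\lambda)\ge2\).} Put \(q_j=p_{2j}-p_{2j-2}\) as in the proof of Lemma~\ref{lem:two-colour}, so that \(q_j(2\cos\theta)=2\cos(2j\theta)\). This identity extends to \(x=2\cosh t\), giving \(q_j(2\cosh t)=2\cosh(2jt)\). One way to see this is that both sides are polynomials in \(x\) agreeing on \((-2,2)\), hence identically. Another is to check the recurrence \(q_{j+1}=(x^2-2)q_j-q_{j-1}\) with \(q_0=2\), \(q_1=x^2-2\), using \(x^2-2=2\cosh 2t\). Writing \(\lambda=2\cosh t\) with \(t\ge0\), we obtain
\[
p_{2j}(\lambda)-p_{2j-2}(\lambda)=2\cosh(2jt)\ge2 .
\]
The boundary case \(\lambda=2\), i.e.\ \(t=0\), is consistent: it gives exactly \(2\).
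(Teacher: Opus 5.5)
Your proof is correct. It departs from the paper's at the one step with real content, the bound $\lambda\ge2$.

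The paper obtains this bound by citing Smith's classification: the connected graphs with spectral radius below $2$ are exactly the finite Dynkin diagrams. You instead derive it from Proposition~\ref{prop:matrix-positive}. Since $h>0$ and $p_j(A)\ge0$ entrywise, the identity $h\trans p_j(A)h=p_j(\lambda)\,h\trans h$ forces $p_j(\lambda)\ge0$ for every $j$. That is impossible for $\lambda=2\cos\theta\in(0,2)$, because some multiple $(j+1)\theta$ lands in $(\pi,2\pi)$.

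This is logically sound within the paper, since Proposition~\ref{prop:matrix-positive} is proved before and independently of this lemma. It makes the lemma rest only on results already established there, and it removes the second external citation. The trade-off is foundational. You deduce an elementary, classical spectral fact from the much deeper Etingof--Eu theorem, whereas the paper keeps the spectral bound independent of the preprojective-algebra machinery.

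Your last step writes $\lambda=2\cosh t$ with $t\ge0$ and obtains $2\cosh(2jt)\ge2$. This is the paper's computation done uniformly, rather than split into the cases $\lambda=2$ and $\lambda>2$. Of your two justifications for $q_j(2\cosh t)=2\cosh(2jt)$, the recurrence argument is the self-contained one. The ``polynomials agreeing on $(-2,2)$'' argument additionally relies on $2\cosh(2jt)$ being the same polynomial in $2\cosh t$ that $2\cos(2j\theta)$ is in $2\cos\theta$.
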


\begin{proof}
Since $A$ is nonnegative and $T$ is connected, the
Perron--Frobenius theorem gives an eigenvector $h>0$ satisfying
$Ah=\lambda h$, where $\lambda=\rho(A)\ge0$.
Smith's classification, in the form recorded in \cite[Section~3.1.1]{BH12}, states that the connected graphs with adjacency spectral radius less than $2$ are precisely the finite Dynkin diagrams $A_n,D_n,E_6,E_7,E_8$. Since $T$ lies outside this list, $\lambda\ge2$.

If $\lambda=2$, the defining recurrence gives
$p_a(2)=a+1$ for $a\ge-1$, and hence
$p_{2j}(2)-p_{2j-2}(2)=2$.
If $\lambda>2$, write $\lambda=2\cosh\eta$ with $\eta>0$.
The same recurrence and initial values give
$p_a(\lambda)=\sinh((a+1)\eta)/\sinh\eta$ for every $a\ge-1$.
Consequently,
\[
\begin{aligned}
p_{2j}(\lambda)-p_{2j-2}(\lambda)
&=\frac{\sinh((2j+1)\eta)-\sinh((2j-1)\eta)}
        {\sinh\eta}\\
&=2\cosh(2j\eta)>2.
\end{aligned}
\]
This proves the asserted bound.
\end{proof}
We now turn to the proof of Theorem~~\ref{thm:root-positivity}. For convenience, we restate
the theorem before giving its proof.
\begin{theorem2}[Root-positivity criterion]
Let \(A\) be the adjacency matrix of a finite connected bipartite
non-Dynkin simple graph $T$, and let \(\alpha\in\Phi^+\). Put
\(\Omega=R_XR_Y\), with respect to a fixed bipartition
\(V=X\sqcup Y\). Suppose that at least one of the following conditions
holds:
\begin{enumerate}
\item \(\Omega^j\alpha\in\Phi^-\) for some \(j\in\mathbb Z\);
\item every \(\beta\in\mathcal N(s_\alpha)\) satisfies
      \(0\le\beta\le\alpha\).
\end{enumerate}
Then $L_\alpha p_{2m}(A)\alpha\ge0$ for every $m\ge0$.
Consequently,
$\operatorname{cone}\{p_0,p_2,p_4,\ldots\}\subseteq\mathcal P_\alpha(A)$
and $\mathbb R_{\ge0}[x^2]\subseteq\mathcal P_\alpha(A)$.
\end{theorem2}

\begin{proof}
The two-colour identities in Lemma~\ref{lem:two-colour} and the
spectral estimate in Lemma~\ref{lem:positive-eigenvector} will be used
throughout.

\textbf{Case 1.}
Suppose that \(\Omega^j\alpha\in\Phi^-\) for some \(j\in\mathbb Z\).
Since \(\alpha\in\Phi^+\), we have \(j\ne0\). Put \(N=2|j|\), and
define the sequence of colours \(U_1,\ldots,U_N\) by
\[
U_a=
\begin{cases}
Y,& j>0\text{ and }a\text{ is odd},\\
X,& j>0\text{ and }a\text{ is even},\\
X,& j<0\text{ and }a\text{ is odd},\\
Y,& j<0\text{ and }a\text{ is even}.
\end{cases}
\]
Thus
$\Omega^j=R_{U_N}\cdots R_{U_1}$. Define $\gamma_0=\alpha$ and
$\gamma_a=R_{U_a}\gamma_{a-1}$ for $1\le a\le N$. Let
$t=\min\{a\in\{1,\ldots,N\}:\gamma_a\in\Phi^-\}$ and set
$\beta=\gamma_{t-1}$.
Then \(\beta\in\Phi^+\) and
\(R_{U_t}\beta=\gamma_t\in\Phi^-\).

The colour reflection \(R_{U_t}\) changes only the coordinates in
\(U_t\). Therefore, for $x\notin U_t$,
$0\le\beta_x=(R_{U_t}\beta)_x\le0$,
so \(\operatorname{supp}(\beta)\subseteq U_t\). Since \(U_t\) is an
independent set,
$Q(\beta)=\sum_{x\in V(T)}\beta_x^2-\sum_{xy\in E(T)}\beta_x\beta_y
=\sum_{x\in U_t}\beta_x^2$.
All simple reflections have integer entries and \(\alpha\) is an
integral real root, so \(\beta\) is integral. Since \(Q(\beta)=1\)
and \(\beta\ge0\), there is a unique \(v\in U_t\) such that
$\beta=e_v$.

Since each \(R_{U_a}\) is an involution, reversing the preceding
reflections recovers \(\gamma_0=\alpha\) from
\(\gamma_{t-1}=e_v\).
We claim that
$\gamma_{t-1-a}=\bigl(p_a(A)+p_{a-1}(A)\bigr)e_v$ for
$0\le a\le t-1$.
For \(a=0\), this is \(\gamma_{t-1}=e_v\). Suppose it holds for
\(0\le a\le t-2\). The next reflection is \(R_{U_{t-1-a}}\). Since the colours
alternate, \(U_{t-1-a}\) is the colour class in which
\(p_{a-1}(A)e_v\) is supported, whereas \(p_a(A)e_v\) is supported
in the opposite colour class. Hence \(R_{U_{t-1-a}}\) leaves
\(p_a(A)e_v\) unchanged and acts only on the coordinates in the
colour class of \(p_{a-1}(A)e_v\). Therefore,
\[
\begin{aligned}
\gamma_{t-2-a}
&=R_{U_{t-1-a}}\gamma_{t-1-a}\\
&=p_a(A)e_v+
  \bigl(Ap_a(A)-p_{a-1}(A)\bigr)e_v\\
&=\bigl(p_{a+1}(A)+p_a(A)\bigr)e_v.
\end{aligned}
\]
Set $H:=p_{t-1}(A)+p_{t-2}(A)$. Taking \(a=t-1\) we have
$\gamma_0=He_v=\alpha$. In particular, $H_{rv}=\alpha_r$ for every
$r\in V$.

We next verify
\[
(x-2)\bigl(p_{t-1}(x)+p_{t-2}(x)\bigr)^2
=
p_{2t-1}(x)-p_{2t-3}(x)-2.
\]
Let $x=2\cos\theta$, where $0<\theta<\pi$.
By the sine representation of \(p_a\) above and the sine addition formula,
\[
\begin{aligned}
p_{t-1}(x)+p_{t-2}(x)
&=\frac{\sin(t\theta)+\sin((t-1)\theta)}
        {\sin\theta}\\
&=\frac{
  2\sin((2t-1)\theta/2)\cos(\theta/2)}
  {2\sin(\theta/2)\cos(\theta/2)}\\
&=\frac{\sin((2t-1)\theta/2)}{\sin(\theta/2)}.
\end{aligned}
\]
Using $x-2=-4\sin^2(\theta/2)$, we get
\[
\begin{aligned}
(x-2)\bigl(p_{t-1}(x)+p_{t-2}(x)\bigr)^2
&=-4\sin^2((2t-1)\theta/2)\\
&=2\cos((2t-1)\theta)-2.
\end{aligned}
\]
On the other hand, the sine subtraction formula gives
\[
\begin{aligned}
p_{2t-1}(x)-p_{2t-3}(x)
&=\frac{\sin(2t\theta)-\sin((2t-2)\theta)}
        {\sin\theta}\\
&=\frac{2\cos((2t-1)\theta)\sin\theta}{\sin\theta}\\
&=2\cos((2t-1)\theta).
\end{aligned}
\]
Thus the two sides of the claimed polynomial identity agree for every
$x\in(-2,2)$. Their difference is a polynomial with infinitely
many zeros, so it is the zero polynomial.
The convention $p_{-1}=0$ includes $t=1$, when the
identity reads $x-2=x-2$.
Consequently, substitution of any square matrix for $x$ is valid;
in particular,
$(A-2I)H^2 =p_{2t-1}(A)-p_{2t-3}(A)-2I$.

Fix \(m\ge0\). The matrices \(p_{2m}(A),H,A\) are symmetric and
commute. Using \(\alpha=He_v\) and the resulting matrix identity,
\[
\begin{aligned}
\alpha\trans(A-2I)p_{2m}(A)\alpha
&=e_v\trans H\trans(A-2I)p_{2m}(A)H e_v\\
&=e_v\trans p_{2m}(A)(A-2I)H^2e_v\\
&=e_v\trans
p_{2m}(A)
\bigl(p_{2t-1}(A)-p_{2t-3}(A)-2I\bigr)e_v\\
&=-2\bigl(p_{2m}(A)\bigr)_{vv}.
\end{aligned}
\]
Indeed, the polynomial
$p_{2m}(x)\bigl(p_{2t-1}(x)-p_{2t-3}(x)\bigr)$
is odd, and the diagonal entries of every odd power of a bipartite
adjacency matrix vanish. Therefore
$\langle\alpha,p_{2m}(A)\alpha\rangle =2\bigl(p_{2m}(A)\bigr)_{vv}$.
Using the coordinate formula for \(L_\alpha\), \(\alpha=He_v\), and the fact that
\(p_{2m}(A)H=Hp_{2m}(A)\), we obtain that for any $r\in V$
\[
\begin{aligned}
(L_\alpha p_{2m}(A)\alpha)_r
&=2\bigl(p_{2m}(A)\alpha\bigr)_r
 -2\bigl(p_{2m}(A)\bigr)_{vv}\alpha_r\\
&=2\sum_{u\in V}H_{ru}\bigl(p_{2m}(A)\bigr)_{uv}
 -2\bigl(p_{2m}(A)\bigr)_{vv}H_{rv}\\
&=2\sum_{\substack{u\in V\\u\ne v}}
H_{ru}\bigl(p_{2m}(A)\bigr)_{uv}\ge0.
\end{aligned}
\]
The last inequality follows from
\(p_j(A)\ge0\) for \(j\ge0\), together with
\(p_{-1}(A)=0\).
This proves the theorem under condition~1.

\textbf{Case 2.}
Suppose now that every \(\Omega^j\alpha\), \(j\in\mathbb Z\), is positive
and that condition~2 holds. If
\(\beta\in\Phi^+\setminus\mathcal N(s_\alpha)\), then
\(s_\alpha\beta\in\Phi^+\). For every $r\in V$, its \(r\)-coordinate gives
$\beta_r\ge\langle\alpha,\beta\rangle\alpha_r$.

Fix \(j\ge1\), and put $x=\Omega^j\alpha$ and
$y=\Omega^{-j}\alpha$.
Since \(\Omega\in\Gamma\) preserves the bilinear form and the form is
symmetric, we have
$\langle\alpha,\Omega^j\alpha\rangle =\langle\Omega^{-j}\alpha,\alpha\rangle =\langle\alpha,\Omega^{-j}\alpha\rangle$.
Hence set
$b:=\langle\alpha,x\rangle=\langle\alpha,y\rangle$.
By the definition of $L_\alpha$,
$\bigl(L_\alpha(x+y)\bigr)_r = 2\bigl(x_r+y_r-b\alpha_r\bigr)$.
If \(b\le0\), this is nonnegative because \(x,y,\alpha\ge0\).
If \(b>0\) and at least one of \(x,y\) is not in
\(\mathcal N(s_\alpha)\), then the coordinate bound above shows
that the same expression is again nonnegative.

It remains to consider the case \(x,y\in\mathcal N(s_\alpha)\). By
condition~2,
$0\le x,y\le\alpha$.
If \(x=\alpha\), then \(\Omega^j\alpha=\alpha\). Applying
\(\Omega^{-j}\) to both sides gives
$y=\Omega^{-j}\alpha=\alpha$. In this case
\(L_\alpha(x+y)=2L_\alpha\alpha=0\). Otherwise, neither \(x\) nor \(y\) is equal to \(\alpha\). Since
\(0\le x,y\le\alpha\), each of the vectors \(x\) and \(y\) has at least
one coordinate strictly smaller than the corresponding coordinate of
\(\alpha\). Thus \(2\alpha-(x+y)\) is nonzero nonnegative vector, and hence
$h\trans(x+y)<2h\trans\alpha$.
Here \(h>0\) is the Perron vector from
Lemma~\ref{lem:positive-eigenvector}. On the other hand,
Lemma~\ref{lem:two-colour} and \(Ah=\lambda h\) give
$h\trans(x+y) = \bigl(p_{2j}(\lambda)-p_{2j-2}(\lambda)\bigr)h\trans\alpha \ge2h\trans\alpha$,
a contradiction. Thus
$L_\alpha(\Omega^j+\Omega^{-j})\alpha\ge0$ for every $j\ge1$.
Finally, \(L_\alpha\alpha=0\), and
\eqref{eq:alpha-expansion} yields
$L_\alpha p_{2m}(A)\alpha = \sum_{j=1}^m L_\alpha(\Omega^j+\Omega^{-j})\alpha\ge0$.
This proves the first conclusion of the theorem. The two cone
inclusions follow from linearity, the definition of
\(\mathcal P_\alpha(A)\), and the monomial-cone inclusion proved above.
\end{proof}

\section{Indicator roots in bipartite graphs and walk inequalities}
\label{sec:trees}

Throughout this section, \(T=(V,E)\) denotes a graph with adjacency
matrix \(A\). For
\(W\subseteq V\), let \(\one_W\in\mathbb R^V\) denote the indicator
vector of \(W\), and write \(\one:=\one_V\). For \(W\subseteq V\), we write \(T[W]\) for the subgraph induced by
\(W\).

\subsection{Indicator roots and their inversion sets}
\label{subsec:indicator-roots}

\begin{lemma}\label{lem:inversion-bound}
If \(g\in\Gamma\) has an expression as a product of \(L\) simple
reflections, then
$|\mathcal N(g)|\le L$.
\end{lemma}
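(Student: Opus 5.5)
We argue by induction on $L$, using the standard behaviour of inversion sets under multiplication by a simple reflection. The base case $L=0$ is $g=I$, for which $\mathcal N(I)=\varnothing$ because $\Phi^+\cap\Phi^-=\varnothing$.

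The key ingredient is that a simple reflection $s_v$ permutes $\Phi^+\setminus\{e_v\}$. To see this, take $\beta\in\Phi^+$ with $\beta\ne e_v$. The reflection $s_v$ changes only the coordinate at $v$. Suppose $\beta$ has a positive coordinate at some $u\ne v$. Then $s_v\beta$ keeps that positive coordinate, so by Proposition~\ref{prop:root-sign} it cannot be negative; hence $s_v\beta\in\Phi^+$. Otherwise $\operatorname{supp}(\beta)\subseteq\{v\}$, so $\beta=ce_v$ with $c$ a positive integer. The condition $Q(\beta)=c^2=1$ forces $c=1$, which is excluded. Moreover $s_v\beta\ne e_v$, since otherwise $\beta=s_ve_v=-e_v$. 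Because $s_v$ is an involution, it therefore maps $\Phi^+\setminus\{e_v\}$ bijectively onto itself, and $\mathcal N(s_v)=\{e_v\}$.

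For the inductive step, write $g=g's_v$, where $g'$ is a product of $L-1$ simple reflections. Let $\beta\in\mathcal N(g)$.
\begin{itemize}
\item If $\beta=e_v$, it contributes at most one element.
\item Otherwise $\beta':=s_v\beta\in\Phi^+$ by the permutation property, and $g'\beta'=g\beta\in\Phi^-$. Hence $\beta'\in\mathcal N(g')$.
\end{itemize}
Since $\beta\mapsto s_v\beta$ is injective, we obtain the inclusion $\mathcal N(g)\subseteq\{e_v\}\cup s_v\mathcal N(g')$. Therefore
\[
|\mathcal N(g)|\le 1+|\mathcal N(g')|\le 1+(L-1)=L
\]
by the induction hypothesis.

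One point needs care. The group $\Gamma$ is a matrix group, and $\mathcal N(g)$ depends only on the matrix $g$. So any expression of $g$ as a product of $L$ simple reflections can be peeled off from the right, and the induction runs on the expression rather than on $g$ itself. The only nontrivial step is the permutation property. Its one subtle point is that the only positive roots supported at a single vertex $v$ are $e_v$ itself, which follows from integrality together with $Q=1$, as established in Subsection~\ref{subsec:reflections}.
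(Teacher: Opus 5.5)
Your proof is correct and is essentially the paper's argument. The paper also first shows $\mathcal N(s_v)=\{e_v\}$ using the coordinate-fixing and $Q=1$ argument; instead of inducting, it partitions $\mathcal N(g)$ for $g=s_{a_L}\cdots s_{a_1}$ by the first index $j$ at which $s_{a_j}\cdots s_{a_1}\beta$ turns negative, which forces $\beta=s_{a_1}\cdots s_{a_{j-1}}e_{a_j}$. Unrolling your inclusion $\mathcal N(g)\subseteq\{e_v\}\cup s_v\mathcal N(g')$ produces exactly this same list of at most $L$ candidate roots.
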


\begin{proof}
We first determine the inversion set of a single simple reflection.
Let \(\beta\in\Phi^+\) and suppose that
\(s_v\beta\in\Phi^-\). For every \(u\ne v\), the reflection \(s_v\)
does not change the \(u\)-coordinate, so
$0\le\beta_u=(s_v\beta)_u\le0$.
Thus \(\beta_u=0\) for every \(u\ne v\), and hence
$\beta=t e_v$
for some \(t>0\). Since \(Q(\beta)=1\) and \(Q(e_v)=1\), we obtain
$1=Q(\beta)=t^2Q(e_v)=t^2$.
Therefore \(t=1\), so \(\beta=e_v\). Conversely,
$s_ve_v=e_v-\langle e_v,e_v\rangle e_v=-e_v\in\Phi^-$.
Hence
$\mathcal N(s_v)=\{e_v\}$.

Now suppose that $g=s_{a_L}\cdots s_{a_1}$. Set $g_0:=I$ and
$g_j:=s_{a_j}\cdots s_{a_1}$ for $1\le j\le L$.
For \(1\le j\le L\), define
\[
\mathcal N_j(g):=
\left\{
\beta\in\mathcal N(g):
\begin{array}{l}
g_i\beta\in\Phi^+\quad(0\le i<j),\\
g_j\beta\in\Phi^-
\end{array}
\right\}.
\]
Every \(\beta\in\mathcal N(g)\) starts as a positive root and ends as
a negative root. Therefore, by the root sign decomposition, there is a
unique first index at which it becomes negative. Consequently,
$\mathcal N(g)=\bigsqcup_{j=1}^{L}\mathcal N_j(g)$.

Let \(\beta\in\mathcal N_j(g)\). Then $g_{j-1}\beta\in\Phi^+$ and
$s_{a_j}g_{j-1}\beta=g_j\beta\in\Phi^-$.
The result for a single simple reflection implies that
$g_{j-1}\beta=e_{a_j}$.
Since
$g_{j-1}^{-1}=s_{a_1}\cdots s_{a_{j-1}}$,
we have
$\beta =g_{j-1}^{-1}e_{a_j} =s_{a_1}\cdots s_{a_{j-1}}e_{a_j}$.
Thus \(\mathcal N_j(g)\) contains at most one root, and therefore
$|\mathcal N(g)| =\sum_{j=1}^{L}|\mathcal N_j(g)| \le L$.
If \(L=0\), then \(g=I\) and
\(\mathcal N(I)=\varnothing\), so the same conclusion holds.
\end{proof}

\begin{lemma}\label{lem:indicator-inversion}
Let \(T\) be a finite connected bipartite simple graph, and let \(U\subseteq V(T)\) be nonempty such that $T[U]$ is a tree. For every
\(e\in E(T[U])\), let \(U_{e,1}\) and \(U_{e,2}\) be the vertex sets
of the two components of \(T[U]-e\). Then
$\mathcal N(s_{\one_U})$ consists precisely of $\one_U$ and, for every
$e\in E(T[U])$, the two indicator roots $\one_{U_{e,1}}$ and
$\one_{U_{e,2}}$; all these roots are distinct. In particular, every
$\beta\in\mathcal N(s_{\one_U})$ satisfies $0\le\beta\le\one_U$, and
$|\mathcal N(s_{\one_U})|=2|U|-1$.
\end{lemma}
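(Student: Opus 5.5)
The plan is to get matching upper and lower bounds on $|\mathcal N(s_{\one_U})|$. For the upper bound, write $s_{\one_U}$ as a product of $2|U|-1$ simple reflections and invoke Lemma~\ref{lem:inversion-bound}. For the lower bound, exhibit $2|U|-1$ distinct positive roots that $s_{\one_U}$ sends to negative roots.

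\emph{Step 1: indicator vectors of induced subtrees are roots, with short words.} Order $U=\{u_1,\ldots,u_k\}$ so that each $W_i=\{u_1,\ldots,u_i\}$ induces a tree. This is possible because $T[U]$ is a tree: repeatedly remove leaves. Then $u_{i+1}$ has exactly one neighbour in $W_i$. It has at least one because $T[W_{i+1}]$ is connected, and at most one because $T[W_{i+1}]$ is an induced tree in which $u_{i+1}$ is a leaf. Since $s_v$ replaces the $v$-coordinate by the sum of the neighbouring coordinates minus the old value, we get $s_{u_{i+1}}\one_{W_i}=\one_{W_i}+e_{u_{i+1}}=\one_{W_{i+1}}$. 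Hence
\[
\one_U=\gamma e_{u_1},\qquad \gamma=s_{u_k}\cdots s_{u_2},
\]
so $\one_U\in\Phi^+$. Moreover $s_{\one_U}=\gamma s_{u_1}\gamma^{-1}$ is a product of $2(k-1)+1=2|U|-1$ simple reflections. Lemma~\ref{lem:inversion-bound} then gives $|\mathcal N(s_{\one_U})|\le 2|U|-1$. The same argument applied to any vertex set inducing a subtree shows that each $\one_{U_{e,i}}$ is a positive root.

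\emph{Step 2: the listed roots are inversions.} First, $s_{\one_U}\one_U=-\one_U\in\Phi^-$. Next fix an edge $e=ab$ of $T[U]$ and write $U_1=U_{e,1}$, $U_2=U_{e,2}$. We compute
\[
\langle\one_U,\one_{U_1}\rangle=\sum_{x\in U}\bigl(2[x\in U_1]-|N(x)\cap U_1|\bigr)=2|U_1|-2|E(T[U_1])|-|E_T(U_1,U_2)|.
\]
Because $T[U]$ is an induced tree, $T[U_1]$ is a tree, so $|E(T[U_1])|=|U_1|-1$. Likewise $e$ is the only edge of $T$ joining $U_1$ to $U_2$. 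Therefore $\langle\one_U,\one_{U_1}\rangle=1$, and
\[
s_{\one_U}\one_{U_1}=\one_{U_1}-\one_U=-\one_{U_2}\in\Phi^-.
\]
By symmetry the same holds for $U_2$. This is the step needing the most care: the induced-subgraph hypothesis is exactly what forces the pairing to equal $1$, since extra edges of $T$ inside $U$ would change it.

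\emph{Step 3: distinctness and count.} The sets $U$, $U_{e,1}$, $U_{e,2}$ are pairwise different for a fixed $e$. A proper nonempty subset $U_{e,i}\subsetneq U$ determines $e$ as the unique edge of $T[U]$ leaving it, and it determines $i$ as the side containing it. So the indicator vectors are pairwise distinct. This gives $1+2(|U|-1)=2|U|-1$ elements of $\mathcal N(s_{\one_U})$, which matches the upper bound from Step 1. Hence the inversion set is exactly this list, and the bound $0\le\beta\le\one_U$ is immediate because every listed set is contained in $U$.
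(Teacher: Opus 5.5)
Your proposal is correct and follows the paper's proof essentially step for step. Both arguments get the upper bound from the word $\gamma s_{u_1}\gamma^{-1}$ of length $2|U|-1$ via Lemma~\ref{lem:inversion-bound}, and the matching lower bound from the pairing computation $\langle\one_U,\one_{U_{e,i}}\rangle=1$, which gives $s_{\one_U}\one_{U_{e,1}}=-\one_{U_{e,2}}$. The differences are cosmetic: you order $U$ by leaf removal instead of parent-before-children in a rooted tree, and you justify the pairwise distinctness of the listed roots, which the paper only asserts.
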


\begin{proof}
Put \(q:=|U|\). We first show that the indicator vector of every
nonempty connected subset of \(U\) is a positive real root.

Root \(T[U]\) at \(u_1\), and order its vertices
\(u_1,\ldots,u_q\) so that every parent precedes its children. For
$1\le j\le q$, set $U_j:=\{u_1,\ldots,u_j\}$.
For \(2\le j\le q\), let \(p_U(u_j)\) be the parent of \(u_j\) in
\(T[U]\). Since \(T[U]\) is a tree and the parent precedes its
children,
$N_T(u_j)\cap U_{j-1}=\{p_U(u_j)\}$.
The coordinate formula for a simple reflection gives
\[
\bigl(s_{u_j}\one_{U_{j-1}}\bigr)_x
=
\begin{cases}
\displaystyle
\sum_{y\sim u_j}(\one_{U_{j-1}})_y
-(\one_{U_{j-1}})_{u_j}
=1,
&x=u_j,\\[6pt]
(\one_{U_{j-1}})_x,
&x\ne u_j.
\end{cases}
\]
Indeed, \(u_j\notin U_{j-1}\), while its unique neighbour in
\(U_{j-1}\) is \(p_U(u_j)\). Hence $e_{u_1}=\one_{U_1}$ and
$s_{u_j}\one_{U_{j-1}}=\one_{U_j}$ for $2\le j\le q$.
It follows inductively that
$\one_U =s_{u_q}\cdots s_{u_2}e_{u_1}\in\Phi^+$.
When \(q=1\), the product is understood to be empty and equal to \(I\). The same construction, applied to any nonempty
\(W\subseteq U\) for which \(T[W]\) is connected, shows that
\(\one_W\in\Phi^+\).

In particular,
$Q(\one_U)=|U|-|E(T[U])|=|U|-(|U|-1)=1$. Let
$w:=s_{u_q}\cdots s_{u_2}$. Then $we_{u_1}=\one_U$ and
$w^{-1}=s_{u_2}\cdots s_{u_q}$.
Since \(w\) preserves the bilinear form, for every vector \(x\) we have
\[
\begin{aligned}
ws_{u_1}w^{-1}x
&=x-\langle e_{u_1},w^{-1}x\rangle\,we_{u_1}\\
&=x-\langle we_{u_1},x\rangle\,we_{u_1}\\
&=x-\langle\one_U,x\rangle\one_U\\
&=s_{\one_U}x.
\end{aligned}
\]
Therefore, $s_{\one_U}=ws_{u_1}w^{-1}$ has an expression involving
$(q-1)+1+(q-1)=2q-1$ simple reflections. Lemma~\ref{lem:inversion-bound}
consequently gives $|\mathcal N(s_{\one_U})|\le2q-1$.

It remains to exhibit \(2q-1\) distinct roots in
\(\mathcal N(s_{\one_U})\). Fix \(e\in E(T[U])\), and let
\(W=U_{e,1}\) and \(U\setminus W=U_{e,2}\). Both \(W\) and
\(U\setminus W\) are nonempty connected subsets of \(U\), so the
construction above shows that $\one_W$ and $\one_{U\setminus W}$
belong to $\Phi^+$.
The only edge of \(T[U]\) joining \(W\) to \(U\setminus W\) is \(e\).
Since \(T[W]\) is a tree,
$|E(T[W])|=|W|-1$.
Thus
\[
\begin{aligned}
\langle\one_U,\one_W\rangle
&=2|W|-2|E(T[W])|-1\\
&=2|W|-2(|W|-1)-1\\
&=1.
\end{aligned}
\]
Consequently,
$s_{\one_U}\one_W=\one_W-\langle\one_U,\one_W\rangle\one_U
=-\one_{U\setminus W}\in\Phi^-$. Therefore
$\one_W\in\mathcal N(s_{\one_U})$.
Interchanging \(W\) and \(U\setminus W\) gives
$\one_{U\setminus W}\in\mathcal N(s_{\one_U})$.

Moreover, $\langle\one_U,\one_U\rangle=2Q(\one_U)=2$, so
$s_{\one_U}\one_U=-\one_U\in\Phi^-$ and
$\one_U\in\mathcal N(s_{\one_U})$. Thus $\one_U$ and the two branch
indicators associated with every edge belong to
$\mathcal N(s_{\one_U})$. These roots are pairwise distinct, and their
number is $1+2|E(T[U])|=1+2(q-1)=2q-1$. Together with the upper bound,
this proves the asserted description of $\mathcal N(s_{\one_U})$ and
the identity $|\mathcal N(s_{\one_U})|=2|U|-1$.
\end{proof}
\subsection{Polynomial positivity for indicator roots}
\label{subsec:indicator-positivity}

\begin{proposition}\label{prop:indicator-positivity}
Let \(T\) be a finite non-Dynkin connected bipartite simple graph, let $A$ be the adjacency matrix of $T$, and let \(U\subseteq V(T)\) be
nonempty such that $T[U]$ is a tree. Then both
$\operatorname{cone}\{p_0,p_2,p_4,\ldots\}$ and
$\mathbb R_{\ge0}[x^2]$ are contained in $\mathcal P_{\one_U}(A)$.
\end{proposition}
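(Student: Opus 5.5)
The plan is to apply Theorem~\ref{thm:root-positivity} with $\alpha=\one_U$, using its condition~2. All the work is checking that its hypotheses hold for this choice.

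First, by Lemma~\ref{lem:indicator-inversion}, $\one_U$ is a positive real root. Its proof builds $\one_U=s_{u_q}\cdots s_{u_2}e_{u_1}$ by adding the vertices of the tree $T[U]$ one at a time, parent before child, so $\alpha=\one_U\in\Phi^+$. The graph $T$ is finite, connected, bipartite and non-Dynkin by assumption. So the standing hypotheses of Theorem~\ref{thm:root-positivity} are satisfied once we fix any bipartition $V=X\sqcup Y$ and put $\Omega=R_XR_Y$.

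Second, we verify condition~2. Lemma~\ref{lem:indicator-inversion} describes the inversion set explicitly. It says $\mathcal N(s_{\one_U})$ consists of $\one_U$ together with, for each edge $e\in E(T[U])$, the branch indicators $\one_{U_{e,1}}$ and $\one_{U_{e,2}}$. Each branch set $U_{e,i}$ is a subset of $U$, so each such indicator satisfies $0\le\one_{U_{e,i}}\le\one_U$ coordinatewise, and trivially $0\le\one_U\le\one_U$. This bound is already recorded in the lemma. Condition~2 therefore holds, and we do not need to decide whether condition~1 also holds.

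Third, Theorem~\ref{thm:root-positivity} now gives $L_{\one_U}p_{2m}(A)\one_U\ge0$ for every $m\ge0$. Hence each $p_{2m}$ lies in $\mathcal P_{\one_U}(A)$. Since $\mathcal P_{\one_U}(A)$ is a convex cone, it contains $\operatorname{cone}\{p_0,p_2,p_4,\ldots\}$. By Lemma~\ref{lem:nonnegative-expansion}, each monomial $x^{2m}$ is a nonnegative combination of $p_0,\ldots,p_{2m}$, which gives $\mathbb R_{\ge0}[x^2]\subseteq\mathcal P_{\one_U}(A)$. This is also the final clause of Theorem~\ref{thm:root-positivity}.

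There is no genuine obstacle: the substantive inputs are the inversion-set computation (Lemma~\ref{lem:indicator-inversion}) and the Case~2 argument inside Theorem~\ref{thm:root-positivity}, both already established. The only point needing care is that the hypotheses of Lemma~\ref{lem:indicator-inversion} hold without assuming $T$ is a tree, only that $T[U]$ is one. That lemma is stated for arbitrary finite connected bipartite $T$, so it applies directly.
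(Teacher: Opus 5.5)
Your proposal is correct and follows the paper's proof: the paper also invokes condition~2 of Theorem~\ref{thm:root-positivity} with $\alpha=\one_U$, using the bound $0\le\beta\le\one_U$ for every $\beta\in\mathcal N(s_{\one_U})$ from Lemma~\ref{lem:indicator-inversion}. Your explicit check that $\one_U\in\Phi^+$, which is established in the proof of that lemma, makes precise a hypothesis the paper leaves implicit.
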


\begin{proof}
By Lemma~\ref{lem:indicator-inversion}, every
$\beta\in\mathcal N(s_{\one_U})$ satisfies $0\le\beta\le\one_U$.
Therefore condition~2 of Theorem~\ref{thm:root-positivity} applies with
\(\alpha=\one_U\), and the result follows.
\end{proof}
\subsection{The walk interpretation}
\label{subsec:walks}

For \(W\subseteq V\), \(r\in V\), and \(k\ge0\), define
$f_k(T,W,r):=e_r\trans A^k\one_W$ and
$w_k(T,W):=\one_W\trans A^k\one_W$.
Thus \(f_k(T,W,r)\) counts walks of length \(k\) starting at \(r\)
and ending in \(W\), whereas \(w_k(T,W)\) counts walks whose initial
and terminal vertices both lie in \(W\).

When \(W=V\), abbreviate
$f_k(T,r):=f_k(T,V,r)=e_r\trans A^k\one$ and
$w_k(T):=w_k(T,V)=\one\trans A^k\one$.
When \(T\) is fixed, the argument \(T\) may be omitted.

For \(W\subseteq V\), define
$\chi_{W,r}:=2e_r-(\one_W)_rC\one_W$ and
$S_k(T,W,r):=\chi_{W,r}\trans A^k\one_W$.
Since \(C=2I-A\),
$S_k(T,W,r) = 2f_k(T,W,r) -(\one_W)_r \bigl(2w_k(T,W)-w_{k+1}(T,W)\bigr)$.

For \(W=V\), write
$\chi_r:=\chi_{V,r}=(A-2I)\one+2e_r$ and
$S_k(T,r):=S_k(T,V,r)$.
Thus
\begin{equation}\label{eq:rooted-def}
S_k(T,r)
=\chi_r\trans A^k\one
=w_{k+1}(T)-2w_k(T)+2f_k(T,r).
\end{equation}

Since \(C=2I-A\) and \(\alpha=\one\),
$e_r\trans L_\one=\chi_r\trans$.
Consequently, for every polynomial \(q\),
$\bigl(L_\one q(A)\one\bigr)_r = \chi_r\trans q(A)\one$.
Summing \eqref{eq:rooted-def} over all roots gives
\begin{equation}\label{eq:sum-roots}
\sum_{r\in V}S_k(T,r)
=
n w_{k+1}(T)-2(n-1)w_k(T).
\end{equation}

\begin{proposition}\label{prop:nondynkin-rooted}
Let \(T\) be a finite non-Dynkin tree. Then
$S_{2m}(T,r)\ge0$ for every \(r\in V(T)\) and every \(m\ge0\).
\end{proposition}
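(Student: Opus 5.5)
The plan is to apply Proposition~\ref{prop:indicator-positivity} with $U=V(T)$ and then translate the resulting vector inequality into the rooted quantity $S_{2m}(T,r)$ through the walk interpretation above.

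First we check the hypotheses. A finite tree $T$ is connected and bipartite, and by assumption it is non-Dynkin. Since $T[V]=T$ is a tree, the proposition applies with $U=V$. It yields $\mathbb R_{\ge0}[x^2]\subseteq\mathcal P_{\one}(A)$. In particular the monomial $q(x)=x^{2m}$ lies in $\mathcal P_{\one}(A)$, which by definition of the cone means
\[
L_{\one}A^{2m}\one\ge0
\]
coordinatewise. The tree on one vertex is $A_1$, which is Dynkin, so it is excluded automatically.

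Next we identify the coordinates. As noted before \eqref{eq:sum-roots}, we have $e_r\trans L_{\one}=\chi_r\trans$, since $L_{\one}=2I-\one\one\trans C$ and $\chi_r=2e_r-C\one=(A-2I)\one+2e_r$. It follows that
\[
\bigl(L_{\one}A^{2m}\one\bigr)_r=\chi_r\trans A^{2m}\one=S_{2m}(T,r)
\]
by \eqref{eq:rooted-def}. Combining this with the previous step gives $S_{2m}(T,r)\ge0$ for every $r$ and every $m\ge0$.

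There is no real obstacle, because everything substantive sits in Theorem~\ref{thm:root-positivity} and Lemma~\ref{lem:indicator-inversion}. The only point needing care is that the cone inclusion for monomials uses Lemma~\ref{lem:nonnegative-expansion}. That lemma writes $x^{2m}$ as a nonnegative combination of $p_0,\dots,p_{2m}$, and each of these lies in $\mathcal P_{\one}(A)$ by the theorem; since $\mathcal P_{\one}(A)$ is a convex cone, the combination lies there too.
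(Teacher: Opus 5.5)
Your proposal is correct and matches the paper's proof. Like the paper, you apply Proposition~\ref{prop:indicator-positivity} with $U=V(T)$ to get $x^{2m}\in\mathcal P_{\one}(A)$, and then use $e_r\trans L_{\one}=\chi_r\trans$ to identify the $r$th coordinate of $L_{\one}A^{2m}\one$ with $S_{2m}(T,r)$.
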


\begin{proof}
Apply Proposition~\ref{prop:indicator-positivity} with \(U=V(T)\).
Since \(x^{2m}\in\mathbb R_{\ge0}[x^2]\), we have
\(x^{2m}\in\mathcal P_\one(A)\). The preceding correspondence then gives
$S_{2m}(T,r)=\chi_r\trans A^{2m}\one=(L_\one A^{2m}\one)_r$, which
is nonnegative.
\end{proof}

\begin{proposition}\label{prop:indicator-rooted}
Let \(T\) be a finite connected bipartite non-Dynkin simple graph
with adjacency matrix \(A\), and let
\(\varnothing\ne U\subseteq V(T)\) be such that \(T[U]\) is a tree.
Then $S_{2m}(T,U,r)\ge0$ for every \(r\in V(T)\) and every \(m\ge0\).
\end{proposition}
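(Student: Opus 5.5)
The plan mirrors Proposition~\ref{prop:nondynkin-rooted}, with $\alpha=\one_U$ in place of $\one$. First I identify $S_k(T,U,r)$ as a coordinate of $L_{\one_U}A^k\one_U$. Since $s_{\one_U}x=x-\langle\one_U,x\rangle\one_U$ and $\langle\one_U,x\rangle=\one_U\trans Cx$, we get
\[
e_r\trans L_{\one_U}=e_r\trans\bigl(2I-\one_U\one_U\trans C\bigr)=2e_r\trans-(\one_U)_r\one_U\trans C .
\]
Because $C$ is symmetric, this row vector is exactly $\chi_{U,r}\trans$. Hence, for every polynomial $q$,
\[
\bigl(L_{\one_U}q(A)\one_U\bigr)_r=\chi_{U,r}\trans q(A)\one_U .
\]
With $q(x)=x^k$ this gives $S_k(T,U,r)=(L_{\one_U}A^k\one_U)_r$ for every $r\in V(T)$.

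Next I invoke Proposition~\ref{prop:indicator-positivity}. Its hypotheses are exactly ours: $T$ is finite, connected, bipartite and non-Dynkin, and $T[U]$ is a tree with $U$ nonempty. It gives $\mathbb R_{\ge0}[x^2]\subseteq\mathcal P_{\one_U}(A)$. This rests on Lemma~\ref{lem:indicator-inversion}, which shows $\one_U\in\Phi^+$ and that every root of $\mathcal N(s_{\one_U})$ lies between $0$ and $\one_U$, so condition~2 of Theorem~\ref{thm:root-positivity} applies. In particular $x^{2m}\in\mathcal P_{\one_U}(A)$, meaning $L_{\one_U}A^{2m}\one_U\ge0$ coordinatewise.

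Combining the two steps gives $S_{2m}(T,U,r)\ge0$ for all $r\in V(T)$ and all $m\ge0$. This holds also for $r\notin U$: there $\chi_{U,r}=2e_r$, and the coordinate is simply $2f_{2m}(T,U,r)$.

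The only step needing care is matching $\chi_{U,r}$ with the $r$th row of $L_{\one_U}$, including the sign and the use of $C\trans=C$. Since $T$ may contain cycles outside $U$, I will also check that $\one_U$ is indeed a root in the ambient graph $T$. Lemma~\ref{lem:indicator-inversion} guarantees this, because each $u_j$ has a unique neighbour in $U_{j-1}$ when $T[U]$ is an induced tree. Beyond these checks the argument is routine.
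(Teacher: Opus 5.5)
Your proposal is correct and follows the paper's proof essentially verbatim: you identify the $r$th row $e_r\trans L_{\one_U}$ with $\chi_{U,r}\trans$ using $C\trans=C$, and then apply Proposition~\ref{prop:indicator-positivity} with $x^{2m}\in\mathbb R_{\ge0}[x^2]$ to get $L_{\one_U}A^{2m}\one_U\ge0$. Your extra remark that for $r\notin U$ the coordinate is simply $2f_{2m}(T,U,r)$ is a harmless consistency check and is not needed for the argument.
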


\begin{proof}
Since \(x^{2m}\in\mathbb R_{\ge0}[x^2]\), Proposition
\ref{prop:indicator-positivity} gives
$x^{2m}\in\mathcal P_{\one_U}(A)$ and hence
$L_{\one_U}A^{2m}\one_U\ge0$. Moreover,
$e_r\trans L_{\one_U}=2e_r\trans-(\one_U)_r\one_U\trans C
=\chi_{U,r}\trans$.
Therefore
\[
\begin{aligned}
S_{2m}(T,U,r)
&=\chi_{U,r}\trans A^{2m}\one_U\\
&=e_r\trans L_{\one_U}A^{2m}\one_U\\
&=\bigl(L_{\one_U}A^{2m}\one_U\bigr)_r
\ge0.
\end{aligned}
\]
\end{proof}

\section{Finite Dynkin trees}\label{sec:finite}

We now treat the trees excluded by
Proposition~\ref{prop:matrix-positive}. The argument uses only
walk-counting generating functions and the explicit Dynkin list.

\subsection{Two nonnegative path kernels}
All generating functions in this section are formal series in
$\R[[z]]$. Their products are defined by finite convolution of
coefficients, so no convergence assumption is needed.
For a graph \(U\), let \(A_U\) denote its adjacency matrix. The
geometric-series identity
$(I-zA_U)^{-1}=\sum_{k\ge0}A_U^kz^k$ holds for every finite graph $U$;
multiplication on either side by $I-zA_U$ verifies it coefficientwise.
A scalar series with constant term one is invertible: writing it as
$1-q$, with $q(0)=0$, its inverse is $\sum_{a\ge0}q^a$.
Each coefficient in this sum involves only finitely many terms.
Products and sums of nonnegative scalar series remain nonnegative.

For a rooted tree \(U\) with root \(r\), let \(\one\) denote the
all-one vector indexed by \(V(U)\), and define
$f_U=e_r\trans(I-zA_U)^{-1}\one=\sum_{k\geq0}f_k(U,r)z^k$. Similarly,
define $g_U=e_r\trans(I-zA_U)^{-1}e_r=\sum_{k\geq0}g_k(U,r)z^k$.
Here $f_k(U,r)$ and $g_k(U,r)$ count walks and closed walks of length
$k$ starting at $r$, respectively. For two rooted trees \(U\) and \(V\), put
$K_{U,V}=(1-2z)f_Uf_V+z(f_Ug_V+f_Vg_U)$.
In particular $g_U$ has nonnegative coefficients and constant term
one, and $K_{U,V}=K_{V,U}$.
Write $P_a$ for the path on $a$ vertices, rooted at an endpoint.

\begin{lemma}\label{lem:small-kernels}
For every finite rooted tree $U$, both $K_{P_1,U}$ and $K_{P_2,U}$ are
nonnegative.
\end{lemma}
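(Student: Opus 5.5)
The plan is to compute both kernels explicitly, using that $P_1$ and $P_2$ have very simple walk series, and then to reduce nonnegativity of $K_{P_1,U}$ and $K_{P_2,U}$ to elementary comparisons between walk counts in $U$. Write $f_k=f_k(U,r)$ and $g_k=g_k(U,r)$, with $f_{-1}=g_{-1}=0$.

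\emph{The kernel $K_{P_1,U}$.} For $P_1$ we have $f_{P_1}=g_{P_1}=1$, so
\[
K_{P_1,U}=(1-2z)f_U+z(f_U+g_U)=(1-z)f_U+zg_U .
\]
The coefficient of $z^k$ is therefore $f_k-f_{k-1}+g_{k-1}$. If $U$ has at least two vertices, every vertex of the tree $U$ has degree at least one. Hence every walk of length $k-1$ from $r$ extends to at least one walk of length $k$, which gives $f_k\ge f_{k-1}$, and the coefficient is nonnegative since $g_{k-1}\ge0$. If $U=P_1$, the coefficients are $1$, $0+1=1$ and then $0$, all nonnegative.

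\emph{The kernel $K_{P_2,U}$.} Rooting $P_2$ at an endpoint, each step of a walk is forced, so $f_{P_2}=1/(1-z)$. Closed walks exist exactly in even lengths, so $g_{P_2}=1/(1-z^2)$. Substituting and using the identity
\[
\frac{z}{1-z}-\frac{z}{1-z^2}=\frac{z^2}{1-z^2},
\]
I expect to obtain
\[
K_{P_2,U}=f_U-\frac{z^2}{1-z^2}\,f_U+\frac{z}{1-z}\,g_U .
\]
The coefficient of $z^k$ is then
\[
f_k-\sum_{i\ge1}f_{k-2i}+\sum_{i=0}^{k-1}g_i .
\]
The negative middle term is the only real obstacle.

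\emph{The case $|V(U)|\ge3$.} The key step is the claim $f_{j+2}\ge 2f_j$. Consider a walk of length $j$ from $r$ ending at $w$, and extend it by stepping to a neighbour $x$ of $w$ and then to a neighbour of $x$. This produces $\sum_{x\sim w}\deg x$ distinct extensions. In a tree on at least three vertices this sum is at least $2$: either $\deg w\ge2$, or $w$ is a leaf whose unique neighbour has degree at least $2$. With the claim in hand, induction gives $f_k\ge\sum_{i\ge1}f_{k-2i}$. The base cases $k=0,1$ have an empty sum, and the inductive step is
\[
f_k\ge 2f_{k-2}\ge f_{k-2}+\sum_{i\ge2}f_{k-2i},
\]
where the last inequality is the induction hypothesis applied to $f_{k-2}$. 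Since $\sum_{i=0}^{k-1}g_i\ge0$, the coefficient is nonnegative without even using the $g$ terms.

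\emph{The small cases.} For $U=P_1$, nonnegativity follows from $K_{P_2,P_1}=K_{P_1,P_2}$ and the first part. For $U=P_2$, every root is an endpoint, so $f_k=1$ and $g_i=1$ exactly for even $i$. The coefficient becomes
\[
1-\lfloor k/2\rfloor+\lceil k/2\rceil\ge1 .
\]
This completes the planned argument; the only step needing care is the doubling estimate $f_{j+2}\ge2f_j$ together with the small exceptional trees.
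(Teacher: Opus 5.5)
Your proposal is correct and follows essentially the paper's argument: explicit formulas for $f_{P_i},g_{P_i}$, the bounds $f_k\ge f_{k-1}$ and $f_{k+2}\ge 2f_k$ coming from $A_U\one\ge\one$ and $A_U^2\one\ge2\one$, and separate treatment of $U=P_1,P_2$. Your induction $f_k\ge\sum_{i\ge1}f_{k-2i}$ is just the coefficientwise unrolling of the paper's observation that $(1-2z^2)f_U\ge0$ and $1/(1-z^2)\ge0$; one harmless slip is that for $U=P_1$ the $z^1$ coefficient of $K_{P_1,U}$ is $f_1-f_0+g_0=0$, not $1$ (indeed $K_{P_1,P_1}=1$), which does not affect nonnegativity.
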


\begin{proof}
For $P_1$, both $f$ and $g$ equal one, so
$K_{P_1,U}=(1-z)f_U+zg_U$.
If $U\neq P_1$, then $A_U\one\ge\one$, and for every $k\ge1$,
$f_k(U,r)-f_{k-1}(U,r) =e_r\trans A_U^{k-1}(A_U\one-\one)\ge0$.
Thus
$(1-z)f_U=1+\sum_{k\geq1}\bigl(f_k(U,r)-f_{k-1}(U,r)\bigr)z^k\ge0$.
The series $g_U$ counts closed walks and is nonnegative.
If $U=P_1$, the preceding kernel formula is simply one. Hence $K_{P_1,U}\geq 0$.

For $P_2$, we have $f_{P_2}=\sum_{k\geq 0}z^k=1/(1-z)$ and
$g_{P_2}=\sum_{k\geq 0}z^{2k}=1/(1-z^2)$. Substitution in the definition of $K_{U,V}$ gives
$K_{P_2,U} =\frac{(1-2z^2)f_U+z(1+z)g_U}{1-z^2}$.
If \(|U|\ge3\), every vertex \(v\) starts at least two walks of
length two. Write \(\deg_U(v)\) for the degree of \(v\) in \(U\).
Indeed, if \(\deg_U(v)\ge2\), each neighbour has degree at
least one; if $v$ is a leaf, its neighbour has degree at least two,
since $U$ is connected and has at least three vertices.
Hence $A_U^2\one\ge2\one$.
For $k\ge2$ it follows that
$f_k(U,r)-2f_{k-2}(U,r) =e_r\trans A_U^{k-2}(A_U^2\one-2\one)\ge0$.
The constant and linear coefficients of $(1-2z^2)f_U$ are $1$ and
$f_1(U,r)$, while its coefficient of $z^k$ is
$f_k(U,r)-2f_{k-2}(U,r)\ge0$ for $k\ge2$. Hence $(1-2z^2)f_U$ is
nonnegative. The series $g_U$ and $1/(1-z^2)$ are also nonnegative, so
$K_{P_2,U}\ge0$ when $|U|\ge3$.
The two smaller cases are
$K_{P_2,P_1}=1+z/(1-z^2)$ and
$K_{P_2,P_2}=(1+2z)/(1-z^2)$.
\end{proof}

\subsection{Exact root-deletion recurrences}
For a rooted tree $U$, set
$B_U=(1-2z)f_U^2-2f_U+(1+2zf_U)g_U$ and
$\mathcal S_U=\sum_{k\ge0}S_k(U,r)z^k$, and let
$\mathcal W_U=\one\trans(I-zA_U)^{-1}\one =\sum_{k\ge0}w_k(U)z^k$.
Delete the root $r$, and let $U_1,\ldots,U_s$ be the resulting
components. Each component has a unique vertex $r_i$ adjacent to
the old root: existence follows from connectedness, and two such
vertices would produce a cycle. Root $U_i$ at $r_i$, and compute
all quantities for $U_i$ in that component itself.
Put $F=\sum_i f_{U_i}$ and $G=\sum_i g_{U_i}$, and define
$\mathcal C_U=\sum_i B_{U_i}+2\sum_{i<j}K_{U_i,U_j}$.
Empty sums are zero.

\begin{lemma}\label{lem:root-recursions}
With this notation,
\[
\begin{aligned}
g_U&=(1-z^2G)^{-1}, & f_U&=g_U(1+zF),\\
\mathcal W_U&=\sum_i\mathcal W_{U_i}+\frac{f_U^2}{g_U},\\
B_U&=z^2g_U^2\mathcal C_U,\\
\mathcal S_U&=\sum_i\mathcal S_{U_i}+zg_U\mathcal C_U.
\end{aligned}
\]
\end{lemma}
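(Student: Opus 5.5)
The plan is to prove the first three identities combinatorially, by decomposing walks at the root, and then to obtain the identities for $B_U$ and $\mathcal S_U$ by pure algebra from them. Everything takes place in $\R[[z]]$, where all the scalar series involved have constant term one where needed and can therefore be inverted, as noted at the start of the section.

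\textbf{Step 1: closed walks at the root.} A closed walk at $r$ splits uniquely into successive excursions, namely the segments between consecutive visits to $r$. Each excursion steps to a unique neighbour $r_i$ and then performs a closed walk at $r_i$ inside $U_i$. It cannot leave $U_i$ without passing through $r$, because $U$ is a tree. The excursion then steps back to $r$. Hence each excursion has generating function $z^2 g_{U_i}$, and summing over $i$ gives $z^2G$. A closed walk is a finite sequence of excursions, so $g_U=\sum_{a\ge0}(z^2G)^a=(1-z^2G)^{-1}$.

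\textbf{Step 2: walks from the root.} A walk from $r$ splits at its last visit to $r$. It consists of a closed walk at $r$ followed by one of two things. Either the walk ends there, contributing $1$, or it takes one step to some $r_i$ and then walks within $U_i$ from $r_i$, contributing $zf_{U_i}$. This gives $f_U=g_U(1+zF)$.

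\textbf{Step 3: all walks.} A walk in $U$ either avoids $r$ or visits it. If it avoids $r$, it lies in a single $U_i$, which contributes $\sum_i\mathcal W_{U_i}$. If it visits $r$, split it at its first visit. The prefix is a walk ending at $r$ with no earlier visit to $r$. By reversal, the prefix has generating function $1+zF$, by the same reasoning as in Step 2. The suffix is an arbitrary walk from $r$, with generating function $f_U$. So the visiting walks contribute $(1+zF)f_U=f_U^2/g_U$.

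\textbf{Step 4: the two algebraic identities.} Write $f_i=f_{U_i}$, $g_i=g_{U_i}$ and $a=1+zF$, so that $f_U=g_Ua$. Expanding the definitions of $B_{U_i}$ and $K_{U_i,U_j}$ and collecting the cross terms gives
\[
\mathcal C_U=(1-2z)F^2-2F+G+2zFG .
\]
Step 1 gives $G=(1-g_U^{-1})/z^2$, and by definition $F=(a-1)/z$. Substituting these makes $\mathcal C_U$ a rational expression in $z$, $a$ and $g_U$. For $B_U$, substitute $f_U=g_Ua$ into $B_U=(1-2z)g_U^2a^2-2g_Ua+g_U+2zg_U^2a$ and compare with $z^2g_U^2\mathcal C_U$. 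After multiplying through by $z^2$ (or by $z$), both sides become polynomials in $z,a,g_U$, and the identity should reduce to a direct expansion. For $\mathcal S_U$, use \eqref{eq:rooted-def} summed against $z^k$:
\[
\mathcal S_U=\frac{\mathcal W_U-w_0(U)}{z}-2\mathcal W_U+2f_U .
\]
Apply this to $U$ and to each $U_i$ and subtract, using $w_0(U)=1+\sum_i w_0(U_i)$ together with the $\mathcal W_U$ recurrence. This gives
\[
\mathcal S_U-\sum_i\mathcal S_{U_i}=\frac{g_Ua^2-1}{z}-2g_Ua^2+2g_Ua-2F .
\]
It then remains to check that this equals $zg_U\mathcal C_U$, again after the same substitutions.

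The combinatorial decompositions in Steps 1--3 are routine; the main obstacle is the algebra in Step 4. I expect the $\mathcal S_U$ identity in particular to need care with the terms $-1/z$ and $-2F$ and with correct use of $g_U^{-1}=1-z^2G$. As a sanity check I would verify both identities on $U=P_1$, where $s=0$, and on $U=P_2$ before trusting the expansion.
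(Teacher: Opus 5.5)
Your proposal is correct, but it reaches the first three identities by a different route from the paper. You decompose walks combinatorially: excursions from $r$ for $g_U$, a last-visit split for $f_U$, and a first-visit split plus reversal for $\mathcal W_U$. The paper instead works with the resolvent directly. It sets $y=(I-zA_U)^{-1}e_r$ and $x=(I-zA_U)^{-1}\one$, and splits $(I-zA_U)y=e_r$ into a root equation and branch equations $(I-zA_{U_i})y_i=zb_iy_r$. It then solves $y_i=zR_ib_iy_r$ and substitutes back. For $\mathcal W_U$ it sums the coordinates of $x$ and uses the symmetry of $R_i$, which is the algebraic counterpart of your reversal step.

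Your version makes the meaning of each factor transparent, but it relies on the uniqueness of the path decompositions, which you justify correctly via the unique attachment vertex $r_i$. The paper's version is mechanical and needs no bijections.

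For $B_U$ both arguments are the same direct expansion, and it does close. Substituting $zF=a-1$ and $z^2G=1-g_U^{-1}$ into $z^2\mathcal C_U$ (which stays inside $\R[[z]]$) gives $g_U^2z^2\mathcal C_U=(1-2z)g_U^2a^2+2zg_U^2a-2g_Ua+g_U=B_U$.

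For $\mathcal S_U$ your difference formula is right, and you can skip the second expansion you were bracing for. Multiply the difference by $z$ and use $zF=a-1$. This gives exactly $(1-2z)g_Ua^2+2zg_Ua-2a+1=B_U/g_U$, which is how the paper proceeds. The $\mathcal S$-recurrence then follows from the $B$-recurrence after cancelling one factor of $z$.
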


\begin{proof}
Let \(R_i=(I-zA_{U_i})^{-1}\), let \(b_i=e_{r_i}\), and let
\(\one_i\) denote the all-one vector, all in the coordinates of \(U_i\).
Set $y=(I-zA_U)^{-1}e_r$, so $(I-zA_U)y=e_r$ and $y_r=g_U$. Its root
equation is $y_r-z\sum_{i=1}^{s}y_{r_i}=1$.
For each $v\in V(U_i)$, the $v$-coordinate of
$(I-zA_U)y=e_r$ is
\[
y_v-z\sum_{\substack{u\in V(U_i)\\u\sim v}}y_u
=
\begin{cases}
zy_r,&v=r_i,\\
0,&v\ne r_i,
\end{cases}
\]
because $r_i$ is the unique vertex of $U_i$ adjacent to $r$.
Writing $y_i$ for the restriction of $y$ to $V(U_i)$, these equations
combine into
$(I-zA_{U_i})y_i=zb_i y_r$.
Multiplying by $R_i=(I-zA_{U_i})^{-1}$ gives
$y_i=zR_i b_i y_r$.
The root equation consequently becomes
$y_r-z\sum_i b_i\trans y_i =y_r-z^2\sum_i g_{U_i}y_r=1$.
Combined with the definition of $G$ and $y_r=g_U$, this proves the
identity for $g_U$ in Lemma~\ref{lem:root-recursions}.

Next let $x=(I-zA_U)^{-1}\one$, so $(I-zA_U)x=\one$ and $x_r=f_U$.
Its root equation is $x_r-z\sum_{i=1}^{s}x_{r_i}=1$, and the branch
equations give $x_i=R_i\one_i+zR_i b_i x_r$. Substitution into the
root equation yields $(1-z^2G)x_r=1+zF$, proving the identity for $f_U$
in Lemma~\ref{lem:root-recursions}.
Both divisions are valid because $1-z^2G$ has constant term one.

The matrix $R_i$ is symmetric, since it is a formal series in the
symmetric matrix $A_{U_i}$. Thus
$\one_i\trans R_i b_i=b_i\trans R_i\one_i=f_{U_i}$.
Summing the coordinates of $x$ over its root and branches gives
\[
\begin{aligned}
\mathcal W_U
&=\one\trans x=x_r+\sum_i\one_i\trans x_i\\
&=f_U+\sum_i\bigl(\mathcal W_{U_i}+zf_{U_i}f_U\bigr)\\
&=\sum_i\mathcal W_{U_i}+f_U(1+zF).
\end{aligned}
\]
The $\mathcal W$-recurrence in Lemma~\ref{lem:root-recursions} follows from
$1+zF=f_U/g_U$; the constant term of $g_U$ is one.

Writing $f_i=f_{U_i}$ and $g_i=g_{U_i}$, and substituting
the definitions of $B_{U_i}$ and $K_{U_i,U_j}$ into $C_U$, the
elementary identities for $F^2$ and $FG$, together with
$f_U=g_U(1+zF)$ and $g_U^{-1}=1-z^2G$, give
\begin{align*}
F^2
&=\sum_i f_i^2+2\sum_{i<j}f_if_j,\\
FG
&=\sum_i f_ig_i+\sum_{i<j}(f_ig_j+f_jg_i),\\
\mathcal C_U
&=(1-2z)F^2-2F+G+2zFG,\\
B_U/g_U^2
&=(1-2z)(1+zF)^2+2z(1+zF)
  +\bigl(1-2(1+zF)\bigr)(1-z^2G)\\*
&=(1-2z)(1+2zF+z^2F^2)+2z+2z^2F\\*
&\hspace{2em}-1-2zF+z^2G+2z^3FG\\*
&=z^2\bigl((1-2z)F^2-2F+G+2zFG\bigr)
=z^2\mathcal C_U.
\end{align*}
This proves the $B$-recurrence in Lemma~\ref{lem:root-recursions}.

Finally, summing \eqref{eq:rooted-def} after multiplication by
$z^{k+1}$ and using $w_0(U)=|U|$ yields
$z\mathcal S_U=(1-2z)\mathcal W_U-|U|+2zf_U$. Subtracting the
corresponding identities for the branches and using
$|U|=1+\sum_i|U_i|$, the $\mathcal W$-recurrence, the identity
$f_U/g_U=1+zF$, and the $B$-recurrence in
Lemma~\ref{lem:root-recursions} gives
\begin{align*}
z\left(\mathcal S_U-\sum_i\mathcal S_{U_i}\right)
&=(1-2z)\frac{f_U^2}{g_U}-1+2z(f_U-F)\\
&=(1-2z)\frac{f_U^2}{g_U}+2zf_U+1-2\frac{f_U}{g_U}\\
&=\frac{B_U}{g_U}
=z^2g_U\mathcal C_U.
\end{align*}
Multiplication by $z$ is injective in $\R[[z]]$, as is seen by
shifting coefficient indices. Cancelling that common factor proves the
$\mathcal S$-recurrence in Lemma~\ref{lem:root-recursions}.
\end{proof}

\begin{lemma}\label{lem:endpoint-path}
For every endpoint-rooted path $P_a$, both $B_{P_a}$ and
$\mathcal S_{P_a}$ vanish.
\end{lemma}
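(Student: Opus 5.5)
We argue by induction on $a$, using the recurrences of Lemma~\ref{lem:root-recursions}. Deleting the endpoint root of $P_a$ for $a\ge2$ leaves the single branch $U_1=P_{a-1}$, rooted at its endpoint adjacent to the old root. With one branch there are no pairs $i<j$, so
\[
\mathcal C_{P_a}=B_{P_{a-1}}.
\]
Lemma~\ref{lem:root-recursions} then gives
\[
B_{P_a}=z^2g_{P_a}^2B_{P_{a-1}},
\qquad
\mathcal S_{P_a}=\mathcal S_{P_{a-1}}+zg_{P_a}B_{P_{a-1}}.
\]
Hence if $B_{P_{a-1}}=0$ and $\mathcal S_{P_{a-1}}=0$, both vanish for $P_a$. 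The only real work is the base case $a=1$.

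For $P_1$ we have $A_{P_1}=0$, so $f_{P_1}=g_{P_1}=1$. Then
\[
B_{P_1}=(1-2z)-2+(1+2z)=0.
\]
For $\mathcal S_{P_1}$, use \eqref{eq:rooted-def} with $n=1$. We have $w_0=f_0=1$ and all higher walk counts are $0$. This gives $S_0=0-2+2=0$, and $S_k=0$ for $k\ge1$, so $\mathcal S_{P_1}=0$.

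Alternatively, one can check the recurrences directly from the empty-branch formulas: with no branches, $\mathcal C_{P_1}=0$, and Lemma~\ref{lem:root-recursions} gives $B_{P_1}=0$ and $\mathcal S_{P_1}=0$ immediately, since empty sums are zero. Either route closes the base case.

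The main (mild) obstacle is checking that the recurrence applies with the correct rooting. When the endpoint root of $P_a$ is deleted, the remaining path must be rooted at the vertex adjacent to the old root, and that vertex is again an endpoint. So the branch is exactly the endpoint-rooted $P_{a-1}$, and the induction is legitimate.
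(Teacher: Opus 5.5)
Your proposal is correct and matches the paper's proof: induction on $a$ using the single-branch case $\mathcal C_{P_a}=B_{P_{a-1}}$ of the root-deletion recurrences, with the base case $P_1$ settled by direct substitution of $f=g=1$. The only cosmetic difference is that you check $\mathcal S_{P_1}=0$ coefficientwise from \eqref{eq:rooted-def}, whereas the paper uses the generating-function form $z\mathcal S=(1-2z)\mathcal W-|U|+2zf$; this is the same computation.
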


\begin{proof}
For the single vertex, $f=g=\mathcal W=1$.
Direct substitution gives $B=0$ and
$z\mathcal S=(1-2z)-1+2z=0$, hence $\mathcal S=0$.
Suppose the result holds for the endpoint-rooted path $P_{a-1}$.
Deleting the endpoint root of $P_a$ leaves $P_{a-1}$ as its
only branch. By the induction hypothesis,
$B_{P_{a-1}}=\mathcal S_{P_{a-1}}=0$.
In the definition of $\mathcal C_{P_a}$, the sum over pairs
of distinct branches is empty, so
$\mathcal C_{P_a}=B_{P_{a-1}}=0$.
The $B$-recurrence in Lemma~\ref{lem:root-recursions} now gives $B_{P_a}=0$.
Finally, the $\mathcal S$-recurrence in Lemma~\ref{lem:root-recursions}, together with
$\mathcal S_{P_{a-1}}=0$ and $\mathcal C_{P_a}=0$,
gives $\mathcal S_{P_a}=0$.
This completes the induction.
\end{proof}

\subsection{Leaf roots of Dynkin spiders}

\begin{lemma}\label{lem:dynkin-leaf}
If $T$ is $D_n$, $E_6$, $E_7$, or $E_8$, then
$S_k(T,r)\ge0$ for every leaf $r$ and every $k\ge0$.
\end{lemma}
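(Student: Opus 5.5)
We root $T$ at the leaf $r$ and peel off the arm containing $r$ one vertex at a time with the recurrences of Lemma~\ref{lem:root-recursions}, until only the centre and the two remaining arms are left. Every contribution will then reduce to a single kernel $K_{P_b,P_c}$ attached to the two arms not containing $r$. Lemma~\ref{lem:small-kernels} makes this kernel nonnegative because one of those arms is always short.

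\emph{Setup.} Let $c$ be the centre and let $r=v_0,v_1,\ldots,v_a=c$ be the arm containing $r$, with $a\ge1$. For $0\le i\le a$, let $T_i$ be the component of $T-\{v_0,\ldots,v_{i-1}\}$ containing $v_i$, rooted at $v_i$, so that $T_0=T$. For $i<a$, deleting the root of $T_i$ leaves the single branch $T_{i+1}$. The sum over pairs in $\mathcal C_{T_i}$ is therefore empty, so Lemma~\ref{lem:root-recursions} gives
\[
\mathcal C_{T_i}=B_{T_{i+1}},\qquad
B_{T_i}=z^2g_{T_i}^2B_{T_{i+1}},\qquad
\mathcal S_{T_i}=\mathcal S_{T_{i+1}}+zg_{T_i}B_{T_{i+1}}.
\]

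\emph{Base step at the centre.} Deleting $c$ from $T_a$ leaves the two other arms. With arm lengths $b$ and $c'$ measured in edges, these are endpoint-rooted paths $P_b$ and $P_{c'}$ on $b$ and $c'$ vertices. By Lemma~\ref{lem:endpoint-path}, $B_{P_b}=B_{P_{c'}}=0$ and $\mathcal S_{P_b}=\mathcal S_{P_{c'}}=0$. Hence
\[
\mathcal C_{T_a}=2K_{P_b,P_{c'}},\qquad
B_{T_a}=2z^2g_{T_a}^2K_{P_b,P_{c'}},\qquad
\mathcal S_{T_a}=2zg_{T_a}K_{P_b,P_{c'}}.
\]

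\emph{Conclusion.} Each $g_{T_i}$ counts closed walks, so it is a nonnegative series. Products and sums of nonnegative series are nonnegative. Thus, by downward induction on $i$, every $B_{T_i}$ and every $\mathcal S_{T_i}$ is nonnegative as soon as $K_{P_b,P_{c'}}\ge0$. In particular $\mathcal S_{T}=\mathcal S_{T_0}\ge0$, which says $S_k(T,r)\ge0$ for all $k$.

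\emph{Remaining check.} It remains to show that $K_{P_b,P_{c'}}\ge0$. We check that in every case one of the two remaining arms has length $1$ or $2$, so that one of the paths is $P_1$ or $P_2$. Lemma~\ref{lem:small-kernels}, together with the symmetry $K_{U,V}=K_{V,U}$, then finishes the argument.
\begin{itemize}
\item For $D_n$ with arms $(1,1,n-3)$, any two arms include an arm of length $1$.
\item For $E_6$ with arms $(1,2,2)$, any two arms include an arm of length $1$ or $2$.
\item For $E_7$ with arms $(1,2,3)$, the complementary pairs are $\{2,3\}$, $\{1,3\}$, $\{1,2\}$.
\item For $E_8$ with arms $(1,2,4)$, the complementary pairs are $\{2,4\}$, $\{1,4\}$, $\{1,2\}$.
\end{itemize}
The main point requiring care is this case check: the kernel lemma only covers $P_1$ and $P_2$. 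The check succeeds precisely because each Dynkin spider has its two shortest arms of lengths at most $1$ and $2$. The other delicate point is the bookkeeping in the base step, namely that the recurrence at the centre really has exactly the two arm branches and that both arms are rooted at their endpoints adjacent to $c$.
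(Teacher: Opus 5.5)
Your proposal is correct and follows the paper's proof essentially step for step. You peel off the arm containing $r$ using the single-branch recurrences $B_{T_i}=z^2g_{T_i}^2B_{T_{i+1}}$ and $\mathcal S_{T_i}=\mathcal S_{T_{i+1}}+zg_{T_i}B_{T_{i+1}}$, reduce at the centre to $\mathcal C=2K_{P_b,P_{c'}}$ via Lemma~\ref{lem:endpoint-path}, and finish with Lemma~\ref{lem:small-kernels}. The only difference is cosmetic: you check the arm pairs case by case, while the paper simply observes that at most one arm length in each Dynkin triple exceeds two.
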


\begin{proof}
Fix a leaf $r$ and write its arm as
$r=v_0-v_1-\cdots-v_L=c$, where $c$ is the centre.
Let $a,b$ be the lengths of the other two arms.
In each triple in the finite Dynkin list, at most one arm length
exceeds two. Hence $\min(a,b)\le2$, regardless of the chosen leaf.

For $0\le j\le L$, let $U_j$ be the tree obtained by deleting
$v_0,\ldots,v_{j-1}$, rooted at $v_j$; in particular $U_0=T$.
The tree $U_L$ consists of the centre and the other two arms.
Its two root-deletion branches are $P_a,P_b$, rooted at their
endpoints adjacent to $c$. An arm of $a$ edges has exactly
$a$ vertices after the centre is removed.

By Lemma~\ref{lem:endpoint-path}, both branches have $B=\mathcal S=0$.
By Lemma~\ref{lem:small-kernels} and the symmetry of $K$,
$\mathcal C_{U_L}=2K_{P_a,P_b}\ge0$.
The $B$- and $\mathcal S$-recurrences in Lemma~\ref{lem:root-recursions}, together with the
nonnegative coefficients of $g_{U_L}$, imply
$B_{U_L}\ge0$ and $\mathcal S_{U_L}\ge0$.

For each $j<L$, the only branch of $U_j$ is $U_{j+1}$.
Thus $\mathcal C_{U_j}=B_{U_{j+1}}$, and the same recurrences read
$B_{U_j}=z^2g_{U_j}^2B_{U_{j+1}}$ and
$\mathcal S_{U_j}=\mathcal S_{U_{j+1}}+zg_{U_j}B_{U_{j+1}}$.
They preserve nonnegativity of both series.
Induction from $j=L$ down to $j=0$ gives $\mathcal S_T\ge0$.
\end{proof}

\subsection{A leaf attains the minimum walk count}
The following comparison applies to every spider, not only to the finite Dynkin spiders.

\begin{lemma}\label{lem:leaf-minimum}
Let $T$ be a nontrivial path or a spider. For every fixed $k\ge0$,
the minimum of $f_k(v)$ over $v\in V(T)$ is attained at a leaf.
The minimizing leaf may depend on $k$.
\end{lemma}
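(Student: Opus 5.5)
The plan is to reduce the statement to a local comparison along edges and then walk from an arbitrary vertex towards its nearest leaf.

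\emph{Comparison lemma.} Let $uv$ be an edge of a finite tree $T$, and let $T_u,T_v$ be the components of $T-uv$ containing $u$ and $v$. Suppose there is an injective map $\varphi:V(T_u)\to V(T_v)$ with $\varphi(u)=v$ that sends adjacent vertices to adjacent vertices, i.e.\ an embedding of $T_u$ rooted at $u$ into $T_v$ rooted at $v$. Then I claim $f_k(u)\le f_k(v)$ for every $k\ge0$.

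\emph{Path and spider case from the lemma.} Take $T$ a nontrivial path or a spider and a non-leaf vertex $v$. Choose a leaf $\ell$ at minimum distance $d\ge1$ from $v$. In a path or spider, every vertex of degree $2$ lies on an arm, so the vertices strictly between $v$ and $\ell$ all have degree $2$. Hence, if $w$ is the neighbour of $v$ on the $v$--$\ell$ path, the component $T_w$ of $T-vw$ is the path $P_d$ rooted at its endpoint $w$.

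The other component $T_v$ contains a walk-free path from $v$ to some other leaf. That leaf is at distance at least $d$ from $v$ by minimality, so this path has at least $d+1$ vertices. Consequently rooted $P_d$ embeds into $T_v$ with $w\mapsto v$, and the lemma gives $f_k(w)\le f_k(v)$.

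If $w\ne\ell$, then $w$ has degree $2$, and its nearest leaf is still $\ell$ at distance $d-1$: the only other direction from $w$ passes through $v$, whose leaves are all at distance at least $d$ from $v$. We may therefore iterate the step. This produces a chain
\[
f_k(v)\ge f_k(w)\ge\cdots\ge f_k(\ell).
\]
The centre of a spider is covered by the same argument, taking $\ell$ at the end of a shortest arm. For a path, $v$ is any interior vertex. The leaf $\ell$ depends only on $v$, but the overall minimizer may vary with $k$, which matches the statement.

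\emph{Proof of the comparison lemma (main obstacle).} A naive injection of walks fails, because after a walk from $u$ first crosses to $v$, the image walk would have to start its remainder from $u$.

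I would instead work with formal series as in Lemma~\ref{lem:root-recursions}. Write $g^{(u)}$ and $f^{(u)}$ for the closed-walk and all-walk series of $T_u$ rooted at $u$, and define $g^{(v)}$, $f^{(v)}$ likewise for $T_v$. Applying the first-passage decomposition of Lemma~\ref{lem:root-recursions} to the single edge $uv$ gives
\[
f_T(u)=\frac{f^{(u)}+z\,g^{(u)}f^{(v)}}{1-z^2g^{(u)}g^{(v)}},\qquad
f_T(v)=\frac{f^{(v)}+z\,g^{(v)}f^{(u)}}{1-z^2g^{(u)}g^{(v)}}.
\]
Both series have the same denominator, with constant term $1$ and nonnegative inverse. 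Their numerators differ by
\[
\bigl(f^{(v)}-f^{(u)}\bigr)+z\bigl(g^{(v)}f^{(u)}-g^{(u)}f^{(v)}\bigr).
\]
The embedding $\varphi$ maps walks of $T_u$ injectively into walks of $T_v$, so $f^{(v)}-f^{(u)}\ge0$ and $g^{(v)}-g^{(u)}\ge0$ coefficientwise. The cross term, however, need not be nonnegative term by term.

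Expanding the quotient therefore does not settle the inequality directly. Two routes remain:
\begin{enumerate}
\item[(a)] Prove $f_T(v)-f_T(u)\ge0$ by induction on $k$, using $f_k(x)=\sum_{y\sim x}f_{k-1}(y)$ together with the stronger inductive hypothesis $f_{k}(x)\le f_{k}(\varphi(x))$ for all $x\in V(T_u)$. For $x\ne u$, each neighbour $y$ of $x$ lies in $T_u$ and $\varphi(y)\sim\varphi(x)$, so the induction closes. At $x=u$ the extra neighbour $v$ must be matched against the neighbour $u$ of $v$; this is where a symmetric hypothesis $f_{k-1}(u)\le f_{k-1}(v)$ is used, and it closes because both inequalities are proved simultaneously.
\item[(b)] In the path--spider situation, where $T_u=P_d$ is explicit, use the closed forms $f^{(P_d)}$ and $g^{(P_d)}$ from Lemma~\ref{lem:small-kernels} and Lemma~\ref{lem:endpoint-path} to verify the numerator difference by hand.
\end{enumerate}
I expect the simultaneous induction in route (a) to succeed, and I would try it first.
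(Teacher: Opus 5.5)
Your reduction to the edge-comparison lemma fails for spiders, which is exactly where the lemma is used (Proposition~\ref{prop:dynkin-rooted}). You assert that every vertex strictly between $v$ and its nearest leaf has degree $2$. This is false, because the centre can lie on that path. In $D_n$ with $n\ge8$ (arms $1,1,n-3$), let $v$ be the neighbour of the centre $c$ on the long arm. Its nearest leaves are the two short-arm leaves, at distance $2$ through $c$. So $w=c$, and $T_w$ is a claw rather than $P_2$. Worse, the step $f_k(w)\le f_k(v)$ you want is false already at $k=1$, since $f_1(c)=3>2=f_1(v)$.

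Heading instead for the leaf of $v$'s own arm does not help. The outer branch at the next arm vertex is $P_{n-4}$, while the longest path from $v$ on the inner side has only three vertices. In fact $v$ is incomparable with both of its neighbours in the sense of your embedding hypothesis, so no chain of edge comparisons starts at $v$. The paper avoids this by importing comparisons of a different kind from \cite{CZZ26}: $f_k(v^i_s)\ge f_k(v^i_{t_i})$ along each arm, and $f_k(v_0)\ge f_k(v^i_{t_i})$ for the centre. It uses \cite{TWKHM13} for paths. For paths, your reduction to the comparison lemma is fine.

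Separately, the comparison lemma itself, which you rightly call the main obstacle, remains unproved, and route (a) does not close. Put $D_k=f_k(v)-f_k(u)$ and $E_{k-1}=\sum_{y'\in N_{T_v}(v)}f_{k-1}(y')-\sum_{y\in N_{T_u}(u)}f_{k-1}(y)$. The extra neighbour of $u$ is $v$ and that of $v$ is $u$, so the recurrence gives $D_k=E_{k-1}-D_{k-1}$. Thus the hypothesis $f_{k-1}(u)\le f_{k-1}(v)$ enters with a minus sign and works against you. The embedding yields only $E_{k-1}\ge0$, whereas you need $D_{k-1}\le E_{k-1}$.

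There is also no slack to exploit. For the tree $u_1-u-v-v_1$ with $m$ leaves attached at $v_1$, one has $D_2=E_2=m$ and $D_3=0$; in fact $D_k=0$ for every odd $k$. So even if the comparison lemma is true, a proof must use growth information about walk counts beyond the embedding. Route (b) does not supply this either: only $T_u=P_d$ is explicit there, $T_v$ is the arbitrary remainder of the tree, and, as you observed, the cross term has no sign.
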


\begin{proof}
For a spider, we use the walk comparisons established in the proof of \cite[Theorem~12, pp.~13--14]{CZZ26}, where the rooted walk count denoted by $w_k(v)$ is our $f_k(v)$. More precisely, label the arms as $v_0v_1^i\cdots v_{t_i}^i$, where $v_0$ is the centre and $v_{t_i}^i$ is a leaf. The two comparisons obtained there from Lemma~7 and Corollary~6, and displayed in Figs.~4 and~5, are $f_k(v_s^i)\ge f_k(v_{t_i}^i)$ for every arm $i$, every $1\le s\le t_i$, and every $k\ge1$, and $f_k(v_0)\ge f_k(v_{t_i}^i)$ for every arm $i$ and every $k\ge1$.
For each vertex \(v\), apply the comparison to the leaf at the
end of an arm containing \(v\). Thus every vertex is compared with
a leaf, and the minimum is attained at a leaf.

For a path with at least three vertices, the required endpoint comparison is established in \cite[Lemma~15 and its proof, p.~819]{TWKHM13}. For a single edge, both vertices are leaves. Finally, $f_0(v)=1$ for every vertex, which covers $k=0$.
\end{proof}

\begin{proposition}\label{prop:dynkin-rooted}
Let \(T\) be a finite Dynkin tree. Then $S_k(T,r)\ge0$ for every
\(r\in V(T)\) and every $k\ge0$.
\end{proposition}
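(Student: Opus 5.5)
The plan is to reduce the statement for an arbitrary vertex $r$ to the case of a leaf, using the formula \eqref{eq:rooted-def}:
\[
S_k(T,r)=w_{k+1}(T)-2w_k(T)+2f_k(T,r).
\]
The first two terms do not depend on $r$. Hence $S_k(T,r)\ge S_k(T,\ell)$ whenever $f_k(T,r)\ge f_k(T,\ell)$. It therefore suffices to show, for every $k\ge0$, that some leaf $\ell$ minimizes $f_k(T,\cdot)$ and satisfies $S_k(T,\ell)\ge0$.

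First I would treat the trivial cases. For $T=P_1$, a direct check gives $S_k=0$: here $w_k=1$ for $k=0$ and $w_k=0$ for $k\ge1$, and $f_k=w_k$. This case is also covered by Lemma~\ref{lem:endpoint-path} with $a=1$.

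For a path $T=P_n$, Lemma~\ref{lem:endpoint-path} shows that $\mathcal S_{P_n}=0$ when the path is rooted at an endpoint. Thus $S_k(P_n,\ell)=0$ for each endpoint $\ell$ and every $k$. By Lemma~\ref{lem:leaf-minimum}, for each $k$ the minimum of $f_k$ over the vertices is attained at a leaf, which must be an endpoint. For any vertex $r$ we then get $f_k(r)\ge f_k(\ell)$ for that endpoint $\ell$, and hence
\[
S_k(P_n,r)\ge S_k(P_n,\ell)=0.
\]
The case $n=2$ is immediate because both vertices are leaves.

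For $T\in\{D_n,E_6,E_7,E_8\}$, each such tree is a spider. Lemma~\ref{lem:dynkin-leaf} gives $S_k(T,\ell)\ge0$ for every leaf $\ell$ and every $k$. Fix $k$. Lemma~\ref{lem:leaf-minimum} provides a leaf $\ell_k$ with $f_k(r)\ge f_k(\ell_k)$ for all $r$. Then $S_k(T,r)\ge S_k(T,\ell_k)\ge0$. The only subtle point is that the minimizing leaf may depend on $k$. This is harmless, because Lemma~\ref{lem:dynkin-leaf} holds for all leaves simultaneously.

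There is no real obstacle, since the work lies in the earlier lemmas. The one point to check carefully is that Lemma~\ref{lem:leaf-minimum} covers every finite Dynkin tree: paths with $n\ge2$, and the spiders $D_n$ ($n\ge4$) and $E_{6,7,8}$. The case $P_1$ is not covered by that lemma and is handled separately as above. I would also confirm that the $k=0$ case poses no difficulty, since $f_0\equiv1$ there.
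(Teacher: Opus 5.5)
Your proposal is correct and matches the paper's proof: for each $k$ you take a leaf $r_*$ minimizing $f_k$ by Lemma~\ref{lem:leaf-minimum}. You then get $S_k(T,r_*)=0$ for paths from Lemma~\ref{lem:endpoint-path} and $S_k(T,r_*)\ge0$ for $D_n,E_6,E_7,E_8$ from Lemma~\ref{lem:dynkin-leaf}, and pass to an arbitrary vertex $r$ because the terms $w_{k+1}-2w_k$ in \eqref{eq:rooted-def} do not depend on the root. Your separate treatment of $P_1$ is also handled the same way in the paper.
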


\begin{proof}
The single-vertex case was checked in Lemma~\ref{lem:endpoint-path}.
Otherwise fix $k$ and choose a minimizing leaf $r_*$ by
Lemma~\ref{lem:leaf-minimum}.
For a path, $S_k(T,r_*)=0$ by Lemma~\ref{lem:endpoint-path};
for $D_n,E_6,E_7,E_8$, it is nonnegative by
Lemma~\ref{lem:dynkin-leaf}.
The terms $w_{k+1}-2w_k$ in \eqref{eq:rooted-def} are independent
of the root, so for every $r\in V(T)$,
$S_k(T,r) =S_k(T,r_*)+2\bigl(f_k(r)-f_k(r_*)\bigr)\ge0$.
\end{proof}

\section{\texorpdfstring{ Completion of the proof and a further application}{Completion of the proof and a further application}}\label{sec:conclusion}
\subsection{Completion of the TWKHM conjecture}
\label{subsec:twkhm}
\begin{proof}[Proof of Theorem~\ref{thm:main}]
Proposition~\ref{prop:nondynkin-rooted} proves the rooted even inequality for non-Dynkin trees, while Proposition~\ref{prop:dynkin-rooted} proves it for finite Dynkin trees. Thus the rooted even inequality holds for every finite tree. Summing \eqref{eq:rooted-def} over all vertices $r\in V(T)$ and using \eqref{eq:sum-roots} gives \eqref{eq:main} for all positive even $k$.

For odd $k$, the desired inequality is the specialization $a=0$, $b=(k-1)/2$, and $c=1$ of the Sandwich Theorem \cite[Theorem~4]{TWKHM13}. We include the short spectral proof because it also identifies the equality cases. Let $u_1,\ldots,u_n$ be a real orthonormal eigenbasis of $A$, with eigenvalues $\lambda_1,\ldots,\lambda_n$, and set $c_i=(\one\trans u_i)^2\ge0$.
Then $\sum_i c_i=n$ and $w_k=\sum_i c_i\lambda_i^k$.
Expanding the right side below yields
\[
n w_{k+1}-w_1w_k
=\frac12\sum_{i,j}c_ic_j
(\lambda_i-\lambda_j)(\lambda_i^k-\lambda_j^k).
\]
For positive odd $k$, the function $x\mapsto x^k$ is strictly
increasing on $\R$. Every summand is nonnegative, so
\eqref{eq:main} follows also for odd $k$.
Equality occurs precisely when all eigenvalues with positive
weights $c_i$ are equal, or equivalently when $\one$ lies in
one eigenspace of $A$. This is equivalent to
regularity of $T$. A nontrivial regular tree has a leaf and thus
has degree one at every vertex; connectedness then makes it the
single edge. The single vertex is also regular.
Therefore the odd cases are strict when $n\ge3$.

Suppose now that $k=2m\ge2$ and equality holds in \eqref{eq:main}.
Every term in the sum \eqref{eq:sum-roots} is nonnegative, so every
$S_{2m}(T,r)$ is zero. Equation \eqref{eq:rooted-def} implies that
$f_{2m}(r)$ is independent of $r$; hence
$A^{2m}\one=c\one$
for some scalar $c\ge0$.
Since $A^2$ is symmetric positive semidefinite and $m\ge1$, the
spectral theorem gives
$\ker(A^{2m}-\mu^m I)=\ker(A^2-\mu I)$ for every $\mu\ge0$,
because the map $x\mapsto x^m$ is injective on $[0,\infty)$.
Taking $\mu=c^{1/m}$, we obtain
$A^2\one=c^{1/m}\one$.

For $n\ge2$, choose a leaf $v$ with neighbour $u$, and let $a=c^{1/m}$
be the constant coordinate of $A^2\one$.
Then $a=(A^2\one)_v=\deg_T(u)$, while
$a=(A^2\one)_u=\sum_{x\sim u}\deg_T(x)$.
The latter sum has \(\deg_T(u)\) terms, each at least one, and equals
\(\deg_T(u)\). Thus every neighbour of $u$ is a leaf.
Connectedness forces $T$ to be a star.

Conversely, for a star on $n\ge2$ vertices,
$A^2\one=(n-1)\one$. It follows that
$w_{2m}=n(n-1)^m$ and $w_{2m+1}=2(n-1)^{m+1}$ for every $m\ge0$,
which gives equality for every positive even $k$.
For $n=1$, all positive-length walk counts vanish.
For $n=2$, $A\one=\one$ and $w_k=2$ for every $k\ge0$.
This proves the equality cases for Theorem~\ref{thm:main}.
\end{proof}
\subsection{\texorpdfstring{A further application: \(U\)-restricted walk inequalities}{A further application: U-restricted walk inequalities}}
We record the following $U$-restricted walk inequality. The case $k=0$ is immediate; polynomial positivity proves the positive even-index cases, while the spectral covariance identity below proves the odd-index cases. Here $w_k(T,U)$ counts the length-$k$ walks in $T$ whose initial and terminal vertices lie in $U$; the intermediate vertices are unrestricted.
\begin{theorem}\label{thm:indicator-global-walk}
Let \(T\) be a finite connected bipartite non-Dynkin simple graph
with adjacency matrix \(A\), and let
\(\varnothing\ne U\subseteq V(T)\) be such that \(T[U]\) is a tree.
Then, for every integer \(k\ge0\),
$|U|\,w_{k+1}(T,U) - 2(|U|-1)\,w_k(T,U) \ge0$.
\end{theorem}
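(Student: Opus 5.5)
The plan splits the index $k$ by parity. For $k=0$ the claim is a direct computation. Since $T[U]$ is a tree, $w_0(T,U)=|U|$ and $w_1(T,U)=\one_U\trans A\one_U=2|E(T[U])|=2(|U|-1)$. The expression therefore equals $|U|\cdot2(|U|-1)-2(|U|-1)|U|=0$.

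For even $k=2m\ge2$, the plan is to sum the rooted inequality of Proposition~\ref{prop:indicator-rooted} over $r\in U$ only. By the formula after the definition of $\chi_{W,r}$, for $r\in U$ we have
\[
S_{2m}(T,U,r)=2f_{2m}(T,U,r)-2w_{2m}(T,U)+w_{2m+1}(T,U).
\]
We also have $\sum_{r\in U}f_{2m}(T,U,r)=\one_U\trans A^{2m}\one_U=w_{2m}(T,U)$. Summing over $r\in U$ therefore gives
\[
\sum_{r\in U}S_{2m}(T,U,r)=|U|\,w_{2m+1}(T,U)-2(|U|-1)\,w_{2m}(T,U).
\]
Each summand is nonnegative by Proposition~\ref{prop:indicator-rooted}, so the even case follows. (The case $m=0$ is also covered by this argument and is consistent with the direct check above.) Summing over all $r\in V(T)$ would be wrong here: rows $r\notin U$ contribute only $2f_{2m}(T,U,r)$, which gives a different and weaker combination.

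For odd $k$, the plan is to use the spectral covariance identity from the proof of Theorem~\ref{thm:main} with weights $c_i=(\one_U\trans u_i)^2\ge0$. Here $u_i$ is an orthonormal eigenbasis of $A$ with eigenvalues $\lambda_i$. Then $\sum_i c_i=|U|$ and $w_k(T,U)=\sum_i c_i\lambda_i^k$, and hence
\[
|U|\,w_{k+1}(T,U)-w_1(T,U)\,w_k(T,U)=\tfrac12\sum_{i,j}c_ic_j(\lambda_i-\lambda_j)(\lambda_i^k-\lambda_j^k)\ge0,
\]
because $x\mapsto x^k$ is increasing for odd $k$. Substituting $w_1(T,U)=2(|U|-1)$ from the first step gives the claim.

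The only real obstacle is getting the bookkeeping right in the even case. The key points are restricting the sum to $r\in U$, where $(\one_U)_r=1$, and recognizing that the row sums over $U$ reproduce $w_{2m}(T,U)$ exactly. The positivity itself comes entirely from Proposition~\ref{prop:indicator-rooted}, which in turn rests on Theorem~\ref{thm:root-positivity} via Lemma~\ref{lem:indicator-inversion}.
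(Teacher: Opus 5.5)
Your proposal is correct and follows the paper's proof essentially step for step. You handle $k=0$ as an identity, and for even $k$ you sum the rooted inequality of Proposition~\ref{prop:indicator-rooted} over $r\in U$ only. For odd $k$ you use the spectral covariance identity with weights $c_i=(\one_U\trans u_i)^2$.
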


\begin{proof}
Since \(T[U]\) is a tree, $w_0(T,U)=|U|$ and
$w_1(T,U)=2|E(T[U])|=2(|U|-1)$.
Thus the assertion is an identity when \(k=0\).

Now let \(k=2m\) be even. For every \(r\in U\), we have
$S_{2m}(T,U,r) = w_{2m+1}(T,U)-2w_{2m}(T,U) +2f_{2m}(T,U,r)$.
Summing over \(r\in U\), and using
$\sum_{r\in U}f_{2m}(T,U,r)=w_{2m}(T,U)$,
we obtain
\[
\begin{aligned}
\sum_{r\in U}S_{2m}(T,U,r)
&=
|U|\,w_{2m+1}(T,U)
-2(|U|-1)\,w_{2m}(T,U).
\end{aligned}
\]
The left-hand side is nonnegative by Proposition
\ref{prop:indicator-rooted}, proving the even case.

It remains to consider positive odd \(k\). Write \(n:=|V(T)|\).
Let \(u_1,\ldots,u_n\) be an orthonormal eigenbasis of \(A\), with
corresponding eigenvalues \(\lambda_1,\ldots,\lambda_n\), and put
$c_i=(\one_U\trans u_i)^2\ge0$.
Then $\sum_i c_i=|U|$ and
$w_\ell(T,U)=\sum_i c_i\lambda_i^\ell$.
Consequently,
\[
\begin{aligned}
|U|\,w_{k+1}(T,U)-2(|U|-1)\,w_k(T,U)
&=w_0(T,U)w_{k+1}(T,U)-w_1(T,U)w_k(T,U)\\
&=\frac12\sum_{i,j}c_ic_j
(\lambda_i-\lambda_j)
(\lambda_i^k-\lambda_j^k).
\end{aligned}
\]
For odd \(k\), the function \(x\mapsto x^k\) is increasing on
\(\mathbb R\), so every summand is nonnegative. This proves the
odd case and hence the theorem.
\end{proof}

\section*{Statement on the use of generative AI tools}

The authors used GPT-6 Astra in the preparation of this work,
including the development of proof arguments, the checking of
intermediate steps and external theorem applications, literature
searches, and the drafting and revision of the manuscript and its
LaTeX source. All mathematical results, arguments, and proofs were independently reviewed and verified by the authors, who take full responsibility for the accuracy and content of this work.

\section*{Acknowledgements}
This work is partly supported by the National Natural Science Foundation of China (No.12371354, W2521102), the Montenegrin-Chinese Science and Technology Cooperation Project (No.4-3)  and  the Science and Technology Commission of Shanghai Municipality (No.25LN3200600).

\end{document}